\documentclass[12pt]{amsart}
\usepackage{amsthm, amssymb, stmaryrd, fullpage}
\usepackage{url}
\usepackage[all]{xy}

\frenchspacing

\DeclareMathAlphabet{\mathpzc}{OT1}{pzc}{m}{it}

\newtheorem{theorem}{Theorem}[subsection]
\numberwithin{equation}{theorem}
\newtheorem{lemma}[theorem]{Lemma}
\newtheorem{cor}[theorem]{Corollary}
\newtheorem{prop}[theorem]{Proposition}

\theoremstyle{definition}

\newtheorem{defn}[theorem]{Definition}
\newtheorem{example}[theorem]{Example}
\newtheorem{remark}[theorem]{Remark}
\newtheorem{hypothesis}[theorem]{Hypothesis}

\newcommand{\calC}{\mathcal{C}}
\newcommand{\calD}{\mathcal{D}}
\newcommand{\calE}{\mathcal{E}}
\newcommand{\calF}{\mathcal{F}}
\newcommand{\calI}{\mathcal{I}}
\newcommand{\calJ}{\mathcal{J}}
\newcommand{\calK}{\mathcal{K}}

\newcommand{\calO}{\mathcal{O}}
\newcommand{\calR}{\mathcal{R}}

\newcommand{\frakm}{\mathfrak{m}}
\newcommand{\frako}{\mathfrak{o}}
\newcommand{\frakp}{\mathfrak{p}}
\newcommand{\frakq}{\mathfrak{q}}

\newcommand{\CC}{\mathbb{C}}
\newcommand{\PP}{\mathbb{P}}
\newcommand{\QQ}{\mathbb{Q}}

\newcommand{\ZZ}{\mathbb{Z}}

\newcommand{\del}{\partial}

\newcommand{\be}{\mathbf{e}}

\DeclareMathOperator{\bDiv}{\mathbf{Div}}
\DeclareMathOperator{\bCDiv}{\mathbf{CDiv}}
\DeclareMathOperator{\CDiv}{CDiv}
\DeclareMathOperator{\Div}{Div}
\DeclareMathOperator{\divis}{divis}
\DeclareMathOperator{\End}{End}
\DeclareMathOperator{\Frac}{Frac}

\DeclareMathOperator{\Irr}{Irr}

\DeclareMathOperator{\rank}{rank}
\DeclareMathOperator{\red}{red}
\DeclareMathOperator{\RZ}{RZ}
\DeclareMathOperator{\Sch}{\mathbf{Sch}}
\DeclareMathOperator{\Spec}{Spec}

\begin{document}

\title{Good formal structures for flat meromorphic connections, III:
Irregularity and turning loci}
\author{Kiran S. Kedlaya}
\date{February 16, 2019}
\dedicatory{To Professor Masaki Kashiwara, on the occasion of his 70th birthday, with admiration}

\thanks{The author was supported by NSF (grants DMS-1101343, DMS-1501214, DMS-1802161), UC San Diego (Warschawski Professorship), a Guggenheim Fellowship (fall 2015), and IAS (Visiting Professorship 2018--2019), and additionally by NSF grant DMS-0932078
while visiting MSRI during fall 2014.}

\begin{abstract}
Given a formal flat meromorphic connection over 
an excellent scheme over a field of characteristic zero, 
in a previous paper we established existence of good formal structures and a good 
Deligne-Malgrange lattice after suitably blowing up.
In this paper, we reinterpret and refine these results by introducing some related structures.
We consider the \emph{turning locus}, which is the set of points
at which one cannot achieve a good formal structure without blowing up.
We show that when the polar divisor has normal crossings, the turning locus is of pure codimension 1 within the polar divisor, and hence of pure codimension 2 within the full space;
this had been previously established by Andr\'e in the case of a smooth polar divisor.
We also construct an \emph{irregularity sheaf} and its associated \emph{b-divisor}, which 
measure irregularity along divisors on blowups of the original space; this generalizes another result of Andr\'e
on the semicontinuity of irregularity in a curve fibration.
One concrete consequence of these refinements is a process for resolution of turning points which is functorial with respect to regular morphisms of excellent schemes; this allows us to transfer the result from schemes to formal schemes, complex analytic varieties, and nonarchimedean analytic varieties.
\end{abstract}

\maketitle

\section*{Introduction}

The Hukuhara-Levelt-Turrittin decomposition theorem
gives a classification of differential modules over the field
$\CC((z))$ of formal Laurent series resembling the decomposition of a
finite-dimensional vector space equipped with a linear endomorphism
into generalized eigenspaces. It implies that after adjoining
a suitable root of $z$, one can express any differential module as a successive
extension of one-dimensional modules.
This classification serves as the basis for
the asymptotic analysis of meromorphic connections around a 
(not necessarily regular) singular point.
In particular, it leads to a coherent description of the \emph{Stokes phenomenon},
i.e., the fact that the asymptotic growth of horizontal sections 
near a singularity must be described using different asymptotic series depending
on the direction along which one approaches the singularity.
(See \cite{varadarajan} for a beautiful exposition of this material.)

This is the third in a series of papers, starting with \cite{kedlaya-goodformal1, kedlaya-goodformal2}, in which we give some higher-dimensional analogues of the Hukuhara-Levelt-Turrittin
decomposition for \emph{irregular} flat formal meromorphic connections 
on complex analytic or algebraic varieties. 
(The regular case is already well 
understood by work of Deligne \cite{deligne}.)
Independently, similar results were obtained by Mochizuki \cite{mochizuki, mochizuki2}.
In the remainder of this introduction, we recall what was established in these prior papers,
explain what is added in this paper, and report some applications by other authors.

\subsection{Resolution of turning points}

In \cite{kedlaya-goodformal1},
we developed a 
numerical criterion for the existence of a \emph{good decomposition}
(in the sense of Malgrange \cite{malgrange-lille}) of a formal flat meromorphic
connection at a point where the polar divisor has normal crossings. 
This criterion is inspired by the treatment of
the original decomposition theorem given by Robba \cite{robba-hensel}
using spectral properties of differential operators on nonarchimedean rings;
our treatment depends heavily on joint work with
Xiao \cite{kedlaya-xiao} concerning differential modules on some nonarchimedean
analytic spaces.

We then applied this criterion to prove
a conjecture of Sabbah \cite[Conjecture~2.5.1]{sabbah}
concerning formal flat meromorphic connections on a two-dimensional
complex algebraic or analytic variety. 
We say that such a
connection has a \emph{good formal structure} at some point if it
acquires a good decomposition after pullback along a finite cover ramified only
over the polar divisor. In general, even if the polar divisor has normal 
crossings,
one only has good formal structures away from some discrete set, the set of
\emph{turning points} (hereafter called the \emph{turning locus}). However, Sabbah conjectured
that one can replace the given surface with a suitable blowup in such
a way that the pullback connection admits good formal structures everywhere;
we refer to such a blowup hereafter as a \emph{resolution of turning points}.
The construction uses the aforementioned numerical criterion plus some analysis on a 
certain space of valuations (called the \emph{valuative tree} by
Favre and Jonsson \cite{favre-jonsson}).

In \cite{kedlaya-goodformal2},
we constructed resolutions of turning points for formal flat meromorphic connections
on excellent schemes of characteristic zero, which include
algebraic varieties of all dimensions over any field of characteristic zero. This combined the numerical criterion of
\cite{kedlaya-goodformal1} with a more intricate valuation-theoretic argument,
based on the properties of one-dimensional Berkovich nonarchimedean
analytic spaces. 

We also obtained a partial result for complex analytic
varieties, using the fact that
the local ring of a complex analytic variety at a point is an
excellent ring. Namely, we obtained \emph{local} resolution of turning points, i.e., we only construct
a good modification in a neighborhood of a fixed starting point.
For excellent schemes, one can always extend the resulting local 
modifications, by taking the Zariski closure of the graph of a certain rational map,
then take a global modification dominating these.
However, this approach is not available for analytic varieties.

Independently, for flat meromorphic connections on projective varieties, resolutions of turning points were constructed\footnote{There is a minor technical discrepancy in the definition of good formal structures between our work and that of Mochizuki. The general existence of resolution of turning points is equivalent under both definitions, but some of our subsequent refinements do not carry over; see Remark~\ref{R:mochizuki def} for a detailed discussion.} by Mochizuki first in dimension 2 \cite{mochizuki} and then in general \cite{mochizuki2}. 
The approach is quite different, as the key argument uses positive-characteristic methods, particularly \emph{$p$-curvatures} in the sense of Katz \cite{katz1, katz2}. This follows in the vein of other positive-characteristic arguments in characteristic-zero algebraic geometry, such as Mori's bend-and-break lemma \cite[Chapter~3]{debarre}, in which one descends from a field to a finite-type $\QQ$-algebra and then reduces modulo a conveniently generic prime. Unfortunately, it is not clear how to extend such methods to the categories of formal schemes or complex-analytic varieties.

\subsection{Purity of the turning locus}

We start with a \emph{purity theorem} for the turning locus.
Let $X$ be a \emph{nondegenerate differential scheme} in the sense of \cite{kedlaya-goodformal2}; in particular, $X$ is a regular excellent $\QQ$-scheme (see Definition~\ref{D:nondegenerate} for the full definition). Given a meromorphic differential module on $X$ whose polar divisor $Z$ has normal crossings, we show that the turning locus is a closed subset of $Z$ of pure codimension 1; consequently, it has pure codimension 2 inside $X$.
As is typical for purity statements, such as Zariski--Nagata purity for branch loci
\cite[Tag 0BJE]{stacks-project}, this amounts to the statement that in the case where $X$ is the spectrum of a local ring of dimension at least 3, the turning locus cannot consist solely of the closed point.

Both the intuition and the proof of this statement rely on the fact that the turning locus can be interpreted in terms of Newton polygons. Loosely speaking, given an affine scheme $X = \Spec(R)$ and a meromorphic differential module over $X$, one can write down a monic univariate polynomial over $\Frac(R)$ whose Newton polygon computes the irregularity of the differential module along divisors of $X$. The poles of the coefficients of this polynomial constitute the polar divisor. If one restricts attention to those coefficients that define vertices of the Newton polygon, then the indeterminacy loci of these coefficients (i.e., the intersections of the zero loci with the polar divisor) constitute the turning locus.

One curious application of the purity theorem is the fact that one may perform resolution of turning points using a greedy algorithm, which alternates between resolution of singularities (to ensure that $X$ is regular and $Z$ has normal crossings) and blowing up in the (reduced) turning locus. (This does not depend on the choice of how to resolve singularities; we will comment further on this choice later.) Note that this argument does not itself give an independent proof of the existence of resolutions of turning points, as this is a key input into the proof.

\subsection{Irregularity b-divisors and sheaves}

We continue with a  repackaging of the results of \cite{kedlaya-goodformal1, kedlaya-goodformal2}, which helps shed some light on identifying resolutions of turning points among all modifications.
Again, let $\calE$ be a meromorphic differential module on a nondegenerate $\QQ$-scheme $X$.
Following Malgrange, we construct a corresponding \emph{irregularity function} on the set of exceptional
divisors on local modifications of $X$ (or equivalently, divisorial valuations on $X$).
One may view this function as a \emph{Weil divisor on the Riemann-Zariski space}
in the language of Boucksom-Favre-Jonsson \cite{boucksom-favre-jonsson},
or as a \emph{b-divisor} in the language of
Shokurov \cite{shokurov}; we adopt the latter terminology here.

The relationship between the irregularity function and resolutions of turning points can be summarized as follows. On one hand, there exists a certain Cartier divisor $D$ on a certain blowup $f: Y \to X$, called the \emph{irregularity b-divisor}, such that the irregularity function is computed by multiplicities of $D$. That is, to measure irregularity along any given exceptional divisor, one may construct a blowup $Y'$ of $Y$ on which the given exceptional divisor appears, and then measure the multiplicity of the pullback of $D$ to $Y'$ along this divisor. 
On the other hand, a blowup $f: Y \to X$ with $Y$ regular is a resolution of turning points if and only if the irregularity b-divisors of both $f^* \calE$ and $\End(f^* \calE) = f^* \End(\calE)$
correspond to Cartier divisors on $Y$ itself, rather than a further blowup.

For this reason, it is desirable to control more closely the structure of the irregularity b-divisor of $\calE$. What we show here (Theorem~\ref{T:nef})
is that it is a \emph{nef} b-divisor: it has nonnegative degree on curves contracted by $f$. 
This implies that there exists an integrally closed ideal sheaf on $X$, the \emph{irregularity sheaf},
whose associated b-divisor is the irregularity divisor (and which is completely functorial).
In particular, the turning locus is the set of points where at least one of the irregularity sheaves of $\calE$ or $\End(\calE)$ is not locally principal; moreover, one may construct a resolution of turning points by principalizing these two ideal sheaves and then resolving singularities.  Again, this does not give an independent proof of the existence of resolution of turning points, as this is a key input into the proof; that said, we can easily imagine that there exists a direct construction of the irregularity sheaf which does avoid the intricate valuation-theoretic arguments of \cite{kedlaya-goodformal2}.

Continuing in the way of speculation, we also point out that the irregularity sheaf may be of use in describing 
\emph{logarithmic characteristic cycles} for algebraic $\mathcal{D}$-modules,
as described in the rank 1 case by Kato \cite{kato}.
We make no attempt in this direction here.

\subsection{Functorial resolution of singularities and turning points}

We conclude by exhibiting resolutions of turning points which satisfy \emph{functoriality} for regular morphisms on the base space.
Here the adjective \emph{regular} does not perform its colloquial function
of distinguishing true morphisms of schemes from \emph{rational morphisms}, which
are only defined on a Zariski open dense subspace of the domain. Rather, a morphism of schemes is \emph{regular} if it is flat with geometrically regular
fibres; for instance, any smooth morphism is regular.
Even more specifically, open immersions and \'etale morphisms are regular, so functoriality for regular morphisms
implies locality for the Zariski and \'etale topologies.
This formalism is modeled on the formalism of functorial\footnote{This is sometimes called \emph{canonical} resolution
of singularities, but this misleadingly suggests a lack of arbitrary choices in the process.
Temkin's proofs can in principle be adapted to other functorial resolution algorithms for complex
algebraic varieties (several of which are described in \cite{hauser}); this should lead to different
(but still functorial) resolutions of singularities
for quasiexcellent schemes over a field of characteristic zero.}
(nonembedded and embedded) resolution of singularities for quasiexcellent schemes over a field of characteristic zero,
as established by Temkin \cite{temkin1, temkin2} using the resolution algorithm for complex algebraic
varieties given by Bierstone and Milman \cite{bierstone-milman, bmt}.

In the preceding discussions, we described two constructions of resolutions of turning points which combine resolutions of singularities with certain
modifications which depend on the specified differential module, in a manner that is functorial for regular morphisms
(either blowing up in the turning locus or principalizing irregularity sheaves).
By insisting upon Temkin's approach to resolution of singularities, we obtain resolutions of turning points which are themselves functorial for regular morphisms.

As with resolution of singularities, making resolution of turning points functorial for regular morphisms has the benefit that it allows the result to be globally transferred from schemes to other categories of interest, such as 
formal schemes, complex analytic varieties (or formal completions thereof), rigid analytic spaces, or Berkovich analytic spaces (Theorem~\ref{T:global}). In each of these categories, every object is covered by neighborhoods which are associated to a certain excellent ring (in the case of complex analytic varieties, one takes the stalk at a closed polydisc); for any inclusion of such neighborhoods, the associated transition map of rings induces isomorphisms of formal completions of closed points, and hence gives rise to a regular morphism of schemes.
For each such neighborhood, we may pass from the excellent ring to its associated scheme and apply resolution of turning points there; functoriality for regular morphism then allows for glueing of the resulting modifications in the desired category.

\subsection{Some related work}

To conclude this introduction, we survey some interactions between resolution of turning points and work
of various other authors.

\begin{itemize}

\item
Prior to any of our work on this topic, 
Andr\'e \cite{andre} studied the variation of irregularity in a complex-analytic family of meromorphic connections
and proved two results which are extended by our present work. One is essentially our purity theorem for the turning locus, but in the case of a smooth polar divisor (see Theorem~\ref{T:purity of the turning locus}). The other is a semicontinuity property for irregularity, which can be recovered from the construction of the irregularity sheaf (see Corollary~\ref{C:semicontinuity}).

This paper, together with \cite{kedlaya-goodformal1, kedlaya-goodformal2}, owe more of a debt to \cite{andre} than might be apparent. As remarked upon briefly at the end of the introduction to \cite{kedlaya-goodformal1}, it was a conversation in the wake of \cite{andre} in which Andr\'e originally suggested to us to attack Sabbah's conjecture
by transposing ideas from our work on semistable reduction for overconvergent $F$-isocrystals
\cite{kedlaya-semi1, kedlaya-semi2, kedlaya-semi3, kedlaya-semi4}.

\item
Asymptotic analysis and the Stokes
phenomenon have been treated in the two-dimensional case by Sabbah
\cite{sabbah}  (building on work of Majima \cite{majima}), conditioned on resolution of turning points.
The higher-dimensional case works similarly; see for example \cite{mochizuki2} or \cite{teyssier16}.

\item
Using resolution of turning points, D'Agnolo and Kashiwara \cite{dagnolo-kashiwara} have described a 
Riemann--Hilbert correspondence for $\mathcal{D}$-modules which are holonomic but not necessarily regular.

\item
An alternate characterization of the turning locus of $\calE$ has been given by Teyssier \cite{teyssier}: 
it is the locus on
the polar divisor where the \emph{solution complexes} of $\calE$ and $\End(\calE)$ are local systems.
This  provides a link between the irregularity b-divisor
and the \emph{irregularity complex} of Mebkhout \cite{mebkhout} which it would be useful to further clarify.
As emphasized
in the introduction of \cite{teyssier}, Teyssier's criterion contrasts with the numerical criteria used herein, by virtue of being a transcendental condition rather than an algebraic one. 

\end{itemize}

\subsection*{Acknowledgments}
Thanks to Sebastian Boucksom, Tommaso de Fernex, Chris Dodd, Mihai Fulger, Mattias Jonsson, 
Rob Lazarsfeld, James McKernan, Matthew Morrow, Mircea Musta\c{t}\u{a}, and Michael Temkin for helpful discussions.
Thanks also to the anonymous referee for detailed feedback, particularly on \S 2.2.

\section{Birational geometry of excellent schemes}
\label{sec:birational geometry}

We begin with some statements and results concerning the birational geometry of excellent $\QQ$-schemes using Shokurov's language of b-divisors.

\setcounter{theorem}{0}
\begin{hypothesis} \label{H:only schemes}
Throughout this paper, the only schemes we consider are noetherian, separated, excellent $\QQ$-schemes.
The only divisors we consider are integral (not rational or real) Weil and Cartier divisors.
\end{hypothesis}

\subsection{Riemann-Zariski spaces}

We start by recalling the definition of the Riemann-Zariski space associated to a scheme.

\begin{defn}
By a \emph{schematic pair}, we will mean a pair $(X,Z)$ in which $X$ is a scheme and $Z$ is a closed subscheme of $X$. We say such a pair is \emph{regular} (and describe it for short as a \emph{regular pair}) if $X$ is regular
and $Z$ is a normal crossings divisor on $X$.
By a \emph{morphism} $f: (X',Z') \to (X,Z)$ of schematic pairs, we will mean a morphism $f: X' \to X$ of schemes
for which $f^{-1}(Z) = Z'$; that is, for $\calI_Z$ the ideal sheaf defining $Z$,
the inverse image $f^{-1} \calI_Z \cdot \calO_{X'}$ should be the ideal sheaf defining $Z'$.
\end{defn}

\begin{defn}
By a \emph{modification} of schemes, we will mean a morphism $f: X' \to X$ which is proper, dominant, and an isomorphism away from a nowhere dense closed subset of $X$. The minimal such subset is called the \emph{center} of $f$.

By a \emph{regularizing modification} of a schematic pair $(X,Z)$, we will mean a 
morphism $(X',Z') \to (X,Z)$ of schematic pairs such that $X' \to X$ is a modification and $(X',Z')$ is a regular pair. We apply this definition to schemes by taking $Z = Z' = \emptyset$.
\end{defn}

We will use resolution of singularities for excellent schemes in the following form. We will give far more precise statements later (see Theorem~\ref{T:desing1} and Theorem~\ref{T:desing2}).
\begin{lemma} \label{L:desing1}
Let $(X,Z)$ be a schematic pair (as in Hypothesis~\ref{H:only schemes}) with $X$ reduced.
\begin{enumerate}
\item[(a)]
The pair $(X,Z)$ admits a regularizing modification $f: Y \to X$.
\item[(b)]
If $X$ is regular, then $f$ may be chosen to be a composition of blowups along regular centers.
\end{enumerate}
\end{lemma}

\begin{defn}
The \emph{Riemann-Zariski space} of a scheme $X$, denoted $\RZ(X)$, is the inverse limit
$\RZ(X) = \varprojlim Y$ of underlying topological spaces for $f: Y \to X$ running over all modifications of $X$
(or to avoid set-theoretic difficulties, a set of representatives of isomorphism classes of modifications).
For $x \in \RZ(X)$, write $x(Y)$ for the image of $x$ in $Y$.

We say a point $x \in \RZ(X)$ is \emph{divisorial} if for some modification $f: Y \to X$, $x(Y)$ is the generic point of some prime divisor of $Y$
(in which case we also say that $x$ is \emph{$f$-divisorial} or \emph{$Y$-divisorial}).
Let $\RZ^{\divis}(X)$ be the subset of $\RZ(X)$ consisting of divisorial points.
\end{defn}

\begin{remark}
Any dominant morphism $Y \to X$ of schemes induces a continuous map $\RZ(Y) \to \RZ(X)$
(compare Remark~\ref{R:dominant pullback} below); this is obviously a homeomorphism when $Y \to X$ is a modification.
In addition, for $X^{\red}$ the underlying reduced closed subscheme of $X$, the inclusion $X^{\red} \to X$ induces a homeomorphism
$\RZ(X^{\red}) \cong \RZ(X)$. There is thus little harm in assuming hereafter that $X$ is reduced.
\end{remark}

\begin{remark} \label{R:cofinal modifications}
One may equally well define $\RZ(X)$ using any cofinal set of modifications of $X$.
For example, by Lemma~\ref{L:desing1}, for $X$ reduced it suffices to consider regularizing modifications of $X$.
Also, since $X$ is excellent, the normalization of $X$ is a modification of $X$ consisting of a finite disjoint union of integral schemes $Y_i$, so $\RZ(X)$ is isomorphic to the disjoint union of the $\RZ(Y_i)$.
\end{remark}

\begin{remark} \label{R:Krull valuations}
For $X$ an integral scheme,
using the valuative criterion for properness,  we may identify $\RZ(X)$ with the set of equivalence classes of
Krull valuations $v$ on the function field $K(X)$ of $X$ such that
for some $x \in X$, the local ring $\calO_{X,x}$ is contained in the
valuation ring $\frako_v$ (we say that such valuations are \emph{centered on $X$}).
Under this identification, $\RZ^{\divis}(X)$ corresponds to the equivalence classes 
of \emph{divisorial valuations}, i.e., those valuations measuring
order of vanishing along some prime divisor on some modification of $X$.
\end{remark}

\subsection{The language of b-divisors}
\label{subsec:b-divisors}

We introduce the language of b-divisors (birational divisors) following \cite[\S 1]{boucksom-defernex-favre}, but with appropriate changes for the context of excellent $\QQ$-schemes rather than varieties over a field.
As noted above, we consider only integral divisors, rather than rational or real divisors.

\begin{hypothesis}
For the remainder of \S\ref{sec:birational geometry}, let $X$ be a reduced scheme
as in Hypothesis~\ref{H:only schemes}.
\end{hypothesis}

\begin{defn}
For $Y$ a reduced scheme, let $\Div Y$ and $\CDiv Y$ denote the groups of (integral) Weil and Cartier divisors, respectively, on $Y$. Recall that taking supports defines a natural morphism $\CDiv Y \to \Div Y$ which is an isomorphism when $Y$ is locally factorial \cite[Tag~0BE9]{stacks-project}, so in particular when $Y$ is regular. We do not need to consider rational or real Weil/Cartier divisors here.
\end{defn}

\begin{defn}
The group of \emph{(integral) b-divisors on $X$}, denoted
$\bDiv X$, is the group $\varprojlim_{Y \to X} \Div Y$, where $Y$ runs over modifications of $X$ and the transition maps are pushforwards.
For any modification $f: Y \to X$, the restriction
map $\bDiv X \to \bDiv Y$ is an isomorphism.
For $D \in \bDiv X$ and $f: Y \to X$ a modification, we refer to the component of $D$ in $\Div Y$ as the \emph{trace} of $D$ on $f$, and denote it by $D(Y)$.
\end{defn}

\begin{defn} \label{D:finiteness condition}
For $X$ the spectrum of a discrete valuation ring, we have natural identifications $\bDiv X = \Div X = \ZZ$.
For general $X$, this observation gives rise to a function from $\bDiv X$ to the set of integer-valued functions on
$\RZ^{\divis}(X)$: given $D \in \bDiv X$ and $x \in \RZ^{\divis}(X)$, choose a modification $f: Y \to X$ for which $x$ is $f$-divisorial, and then compute the image of $D$ in $\Div \Spec(\calO_{Y,x(Y)})$. This does not depend on the choice of $f$ because $\calO_{Y,x(Y)}$ does not depend on this choice.

Via this construction, we obtain an isomorphism of $\bDiv X$ with the group of functions $m: \RZ^{\divis}(X) \to \ZZ$
with the following finiteness property: for any modification $f: Y \to X$, there are only finitely many $f$-divisorial points $x \in \RZ^{\divis}(X)$ for which $m(x) \neq 0$. We use this interpretation to define the componentwise comparison relation $\leq$ on $\bDiv_X$.
\end{defn}

\begin{defn} \label{D:Cartier b-divisor}
The group of \emph{(integral) Cartier b-divisors on $X$}, denoted
$\bCDiv X$, is the group $\varinjlim_{Y \to X} \CDiv Y$, where $Y$ runs over modifications of $X$ and the transition maps are pullbacks. Again, for any modification $f: Y \to X$, the transition
map $\bCDiv X \to \bCDiv Y$ is an isomorphism.

The morphisms $\CDiv Y \to \Div Y$ induce a morphism $\bCDiv X \to \bDiv X$. Since we need only consider regularizing
modifications in light of Remark~\ref{R:cofinal modifications}, this morphism is injective; that is, we may interpret Cartier b-divisors as a special type of b-divisors.
\end{defn}

\begin{remark} \label{R:dominant pullback}
Let $X' \to X$ be a dominant morphism of reduced schemes. Then the pullback of any modification of $X$ is a modification of $X'$, so we obtain morphisms 
\[
\RZ(X') \to \RZ(X), \qquad \RZ^{\divis}(X') \to \RZ^{\divis}(X)
\]
and pullback morphisms 
\[
\bDiv X \to \bDiv X', \qquad \bCDiv X \to \bCDiv X'.
\]
\end{remark}

\begin{remark}
The term \emph{b-divisor} was introduced by Shokurov \cite{shokurov}
in his construction of 3-fold and 4-fold flips, but has since become
standard in birational geometry. See \cite{corti} for further discussion.
A very similar notion appears in the work of
Boucksom-Favre-Jonsson \cite{boucksom-favre-jonsson},
under the guise of Weil divisors on Riemann-Zariski spaces,
and is further developed in \cite{boucksom-defernex-favre}.
(In that language, Cartier b-divisors correspond to Cartier divisors on Riemann-Zariski spaces.)

The distinction between \emph{b-divisors} and \emph{Cartier b-divisors} has nothing to
do with the distinction between Weil and Cartier divisors on an individual space;
after all, the morphism from Cartier b-divisors to b-divisors uses the fact that Weil and Cartier divisors on a regularizing modification coincide (see Definition~\ref{D:Cartier b-divisor}). Rather, the terminology refers to the distinction between
pushforward functoriality for Weil divisors and pullback functoriality for Cartier divisors.
\end{remark}

\subsection{Determinations of Cartier b-divisors}

By definition, the data of a Cartier b-divisor does not include the specification of a particular Cartier divisor on a particular modification, and indeed there is not necessarily a preferred option. This leads us to the following definition.

\begin{defn}
For $D \in \bCDiv X$, a \emph{determination} of $D$ is a
modification $f: Y \to X$ such that $D$ belongs to the image of $\CDiv Y$ in $\bCDiv X$.
In this case, the trace $D(Y)$ is a Cartier divisor and is the element of $\CDiv Y$ mapping to $D$.
\end{defn}

\begin{remark}
Although we have opted not to do so here, it would be reasonable to refer to a determination of a Cartier b-divisor $D$  as a \emph{resolution} of $D$.
\end{remark}

\begin{lemma} \label{L:smooth centers}
For $X$ regular and $D \in \bCDiv X$, there exists a determination of $D$ which is a composition of blowups along regular centers.
\end{lemma}
\begin{proof}
Let $f: Y \to X$ be a determination of $D$. By choosing a relatively ample divisor for $f$ and pushing forward,
we can write $f$ as the blowup in some closed subscheme $Z$. We may apply Lemma~\ref{L:desing1} to the pair
$(X, Z)$ to conclude.
\end{proof}

\begin{defn}
For $D \in \bCDiv X$, the \emph{Cartier locus} of $D$ is the maximal dense subset $U$ of $X$ such that the restriction of $D$ to $\bCDiv U$ (in the sense of Remark~\ref{R:dominant pullback}) belongs also to $\CDiv U$. The complement of this set is the \emph{non-Cartier locus} of $D$. Note that for any determination $f: Y \to X$ of $D$, 
the non-Cartier locus is contained in the image of the support of $D(Y)$ in $X$,
and therefore is nowhere dense. Also, if $X$ is normal, then the non-Cartier locus has codimension at least 2 in $X$.

For $f: Y \to X$ a modification, we refer to the Cartier locus and the non-Cartier locus of $f^* D \in \bCDiv Y$ also as the \emph{Cartier locus} and \emph{non-Cartier locus} of $D$ on $Y$.
\end{defn}

\begin{defn}
If $f$ is a determination of $D \in \bCDiv X$, then the center of $f$ must contain the non-Cartier locus of $D$.
In general, it is not possible to choose a determination of $D$ with center equal to the non-Cartier locus of $D$; this is shown by the following example of Fulger taken from  \cite[Example~4.2]{boucksom-defernex-favre}.
\end{defn}

\begin{example} \label{exa:no determination}
Let $X$ be the affine 3-space over $\CC$ with origin $O$. 
Let $f_1: Y_1 \to X$ be the blowup along a line $L$ through $O$.
Let $f_2: Y \to Y_1$ be the blowup at a closed point $P$ in $f_1^{-1}(O)$.
Put $f = f_1 \circ f_2$. Then the exceptional divisor of $f_2$ may be viewed as an element
$D$ of $\bCDiv X$
with non-Cartier locus equal to $O$.

Suppose that $f': Y' \to X$ were a determination of $D$ with center $O$. The exceptional fibre $E$ of $f_1$ may be viewed as a $\PP^1$-bundle over $L$. Let $C_0$ (resp.\ $C_1$) be the strict transform in $Y$ of a section of $E \to L$ not passing through (resp.\ passing through) the point $P$. The intersection number $D(Y) \cdot C_i$
is then equal to $i$. On the other hand, if we write $L'$ for the strict transform of $L$ in $Y'$, then $D(Y) \cdot C_i = D(Y') \cdot L'$ for $i=0,1$, a contradiction.
\end{example}

\subsection{Relative nonnegativity}

We next introduce a relative nonnegativity property for Cartier b-divisors and show how it can be used to circumvent the issue appearing in Example~\ref{exa:no determination}.

\begin{defn}
Let $\calK_X$ denote the union of all fractional ideal sheaves on $X$.
By definition, an element of $\CDiv X$ is a global section of $\calK_X/\calO_X^\times$.

For $D \in \bCDiv X$, 
choose a determination $f: Y \to X$ of $D$ and let $\calI_X(D) \subseteq \calK_X$ be the union of all fractional ideal sheaves $\calJ$ on $X$ for which
$f^{-1} \calJ \cdot \calO_Y \subseteq \calO_Y(D(Y))$.
Note that $\calI_X(D)$ is itself a fractional ideal sheaf, and that it depends on $D$ but not on $f$.
\end{defn}

\begin{defn} \label{D:ideal to divisor}
Let $\calI$ be a fractional ideal sheaf on $X$. Using Definition~\ref{D:finiteness condition}, we may construct a b-divisor $D(\calI) \in \bDiv X$ whose corresponding function $\RZ^{\divis}(X) \to \ZZ$ takes $x$ to the multiplicity of $\calI$ along $x$. Let $f: Y \to X$ be the blowup of $X$ along $\calI$; by construction, $f^{-1} \calI \cdot \calO_Y$ is locally principal and so corresponds to an element of 
$\CDiv Y$. It follows that $D(\calI) \in \bCDiv X$.
Note that $\calI_X(D(\calI))$ is the integral closure of $\calI$, which in general need not equal $\calI$.
\end{defn}

\begin{defn} \label{D:basepoint-free}
For $D \in \bCDiv X$, we say that $D$ is \emph{basepoint-free} (relative to $X$) if 
$D(\calI_X(D)) = D$; equivalently, for some (hence any) determination $f: Y \to X$ of $D$, the adjunction map $f^* f_* \calO_Y(D(Y)) \to \calO_Y(D(Y))$
is an isomorphism. 
\end{defn}

\begin{remark}
Choose $D \in \bCDiv X$ and let $f: Y \to X$ be a determination of $D$.
Then by Nakayama's lemma, $D$ is basepoint-free if and only if for each $x \in X$,
the restriction of $\calO(D(Y))$ to $f^{-1}(x)^{\red}$ is generated by global sections.
By the theorem on formal functions, this means that the basepoint-free property may be checked by passing to the formal completion of each point of $X$.
\end{remark}

\begin{remark} \label{R:semiample determination}
If $D \in \bCDiv_X$ is basepoint-free, then (as in Definition~\ref{D:ideal to divisor}) blowing up $X$ along $\calI_X(D)$ yields a determination of $D$ with center equal to the non-Cartier locus of $D$; by the universal property of blowing up, this determination is the unique minimal determination of $D$. By contrast, for general $D \in \bCDiv_X$, there need not exist a unique minimal determination of $D$.
\end{remark}

\begin{defn}
For $D \in \bCDiv X$, we say that $D$ is \emph{nef} if there exists a determination $f: Y \to X$
of $D$ such that the pullback of $\calO_Y(D(Y))$ to each fibre of $f$ is nef; that is, for any commutative diagram
\begin{equation} \label{eq:nef diagram}
\xymatrix{
C \ar[r] \ar[d] & Y \ar^{f}[d] \\
\Spec(k) \ar[r] & X
}
\end{equation}
in which $k$ is an algebraically closed field and $C$ is a smooth proper connected curve over $k$, the pullback of $\calO_Y(D(Y))$ to $C$
has nonnegative degree. The same is then true for any other determination of $D$.
\end{defn}

\begin{lemma} \label{L:nef}
Suppose that $X$ is regular. Then for $D \in \bCDiv X$, $D$ is basepoint-free if and only if $D$ is nef.
\end{lemma}
\begin{proof}
Let $f: Y \to X$ be a determination of $D$.
If $D$ is basepoint-free, then for any commutative diagram as in \eqref{eq:nef diagram}, the pullback of $\calO_Y(D(Y))$  
is a line bundle which is globally generated, and hence has nonnegative degree.

To prove the converse, by Lemma~\ref{L:smooth centers} we may reduce to the case where
$f$ is the blowup along a smooth center $Z$. 
Then the only prime divisor of $Y$ which is not the proper transform of a prime divisor of $X$ is the exceptional divisor $E$ of $Y$. Let $m$ be the multiplicity of $E$ in $D$; then $D - mE \in \CDiv_X$, so $D$ is basepoint-free (resp.\ nef) if and only if $mE$ is. Since the degree of $E$ on every contracted curve is negative, $mE$ is nef if and only if $m<0$. On the other hand, $\calI_X(-E)$ is the ideal sheaf defining $Z$, so it is clear that $mE$ is basepoint-free for all $m<0$. This proves the claim in this case.
\end{proof}

\begin{remark}
In Lemma~\ref{L:nef}, it is probably critical that $X$ be regular. Otherwise, one can exert no control over the shape of the fibers of a determination (e.g., consider the affine cone over a projective variety over a field).
It should thus be possible to construct a counterexample against a generalization of Lemma~\ref{L:nef} using the fact that nef divisors on a smooth projective surface over an algebraically closed field need not be basepoint-free or even semiample (i.e., some positive integer multiple is basepoint-free). For a concrete example, see \cite[Example~11.12]{laface-testa}.
\end{remark}

In the spirit of Lemma~\ref{L:nef}, we state some related facts.
\begin{lemma} \label{L:condition for nef}
Suppose that $X$ is regular.
Choose $D \in \bCDiv_X$  and choose a determination of $D$ of the form
\[
Y = Y_n \stackrel{f_n}{\to} Y_{n-1} \cdots \stackrel{f_1}{\to} Y_0 = X
\]
where each $f_i$ is the blowup along a regular center
(this is always possible by Lemma~\ref{L:smooth centers}).
Suppose that for $i=1,\dots,n$, the inequality $D(Y_i) \leq D(Y_{i-1})$ holds in $\bCDiv_X$.
Then $D$ is nef.
\end{lemma}
\begin{proof}
Choose a diagram as in \eqref{eq:nef diagram}, and choose the smallest index $i$ for which
$C$ is not fully contracted in $Y_i$. By hypothesis, the difference $D(Y_i)-D(Y_{i-1})$ is a nonpositive multiple of the exceptional divisor of $f_i$; it therefore has nonnegative degree on $C$.
\end{proof}

The converse of this statement is the following.
\begin{lemma} \label{L:equality condition for determination}
Let $D \in \bCDiv_X$ be nef. Let $f: Y \to X$ be a regularizing modification
and view $D(Y) \in \CDiv_Y$ as an element of $\bCDiv_X$.
Then $D \leq D(Y)$, with equality if and only if $f$ is a determination of $D$.
\end{lemma}
\begin{proof}
By Lemma~\ref{L:smooth centers}, there exists a chain of blowups along regular centers
\[
Y_n \stackrel{f_n}{\to} Y_{n-1} \cdots \stackrel{f_1}{\to} Y_0 = Y
\]
which is a determination of $D$. Then for each $i$ we have an inequality $D(Y_i) \leq D(Y_{i-1})$ in $\bCDiv_X$,
with equality if and only if $D(Y_i) \in \bCDiv_{Y_{i-1}}$; this proves the claim.
\end{proof}

\begin{remark}
There is also a version of the nef condition for real b-divisors, using which one
can assert that a limit (for the locally convex direct limit topology) of nef b-divisors
is again nef. See \cite{boucksom-defernex-favre, boucksom-favre-jonsson}.
\end{remark}

\section{Purity of the turning locus}

In this section, we recall the basic properties of turning loci, and then establish a purity theorem for them (Theorem~\ref{T:purity of the turning locus}).

\subsection{Differential modules}

We review some basic definitions and terminology from \cite{kedlaya-goodformal1, kedlaya-goodformal2} concerning differential modules on schemes. We start by recalling \cite[Definition~3.1.2, Definition~3.2.2]{kedlaya-goodformal2}.

\begin{defn} \label{D:nondegenerate}
A \emph{nondegenerate differential scheme} is a pair $(X, \calD_X)$ in which $X$ is a scheme (as in Hypothesis~\ref{H:only schemes}), $\calD_X$ is a coherent sheaf equipped with an action on $\calO_X$ by derivations,
and for every point $x \in X$ there exist a regular sequence of parameters $x_1,\dots,x_n \in \calO_{X,x}$ and derivations $\del_1,\dots,\del_n \in \calD_{X,x}$ such that 
\[
\del_i(x_j) = \begin{cases} 1 & i=j \\ 0 & i \neq j. \end{cases}
\]
In particular, the scheme $X$ is regular.
See the appendix for an erratum to \cite{kedlaya-goodformal2} related to this definition.
\end{defn}

\begin{defn} \label{D:diff-module}
Let $(X,Z)$ be a schematic pair in which $X$ is equipped with the structure of a nondegenerate differential scheme
and $Z$ contains no component of $X$.
A \emph{$\nabla$-module} over $\calO_X(*Z)$ is a vector bundle $\calE$ on $X$ whose base extension to $\calO_X(*Z)$
is equipped with an action of $\calD_X$ satisfying the Leibniz rule.
\end{defn}

\begin{defn} \label{D:good formal structure}
With notation as in Definition~\ref{D:diff-module} and $x \in X$, 
a \emph{good formal structure} for $\calE$ at $x$ is a decomposition 
\[
\calE_{x} \otimes_{\calO_{X,x}} S \cong \bigoplus_{\alpha \in I}  E(\phi_\alpha) \otimes_S \calR_{\alpha}
\]
of $(\calD_{X,x} \otimes_{\calO_{X,x}} S)$-modules, where $S$ is a finite \'etale algebra over
$\widehat{\calO}_{X,x}(*Z)$ (the hat denoting completion along the intersection of the components of $Z$ containing $x$), $I$ is a finite index set,
$\calR_\alpha$ is a regular differential module over $S$,
and the $\phi_\alpha$ are elements of $S$ satisfying the following conditions. (Here $S_0$ denotes the integral closure of $\calO_{X,x}$ in $S$.)
\begin{enumerate}
\item[(i)]
For $\alpha \in I$, if $\phi_\alpha \notin S_0$, then $\phi_\alpha$ is a unit in $S$ and $\phi_\alpha^{-1} \in S_0$.
\item[(ii)]
For $\alpha,\beta \in I$, if $\phi_\alpha - \phi_\beta \notin S_0$, then $\phi_\alpha - \phi_\beta$ is a unit in $S$ and $(\phi_\alpha-\phi_\beta)^{-1} \in S_0$.
\end{enumerate}
\end{defn}

\begin{defn}
With notation as in Definition~\ref{D:diff-module}, a point $x \in X$ is a \emph{turning point} for $\calE$ if
$\calE$ does not admit a good formal structure at $x$. The set of turning points for $\calE$ is called the 
\emph{turning locus} of $\calE$. For $(X,Z)$ a regular pair, the turning locus is a nowhere dense closed subset of $Z$
\cite[Proposition~5.1.4]{kedlaya-goodformal2}. 

For $f: Y \to X$ a modification, the turning locus of $f^* \calE$ is contained in the inverse image of the turning locus of $\calE$. If the former is empty, we say that $f$ is a \emph{resolution of turning points} of $\calE$.
Note that $f$ need not be a regularizing modification of $(X,Z)$, but in practice we will usually add this condition when applying this definition.
\end{defn}

\begin{remark} \label{R:mochizuki def}
As noted in \cite[Remark~8.1.4]{kedlaya-goodformal2}, the definition of good formal structures here is essentially the one used by Sabbah \cite{sabbah}. Mochizuki \cite{mochizuki, mochizuki2} works with a more restrictive definition of good formal structures, so any turning point in our definition would be a turning point in Mochizuki's definition but not \emph{vice versa}. However, at any given point, if both $\calE$ and $\End(\calE)$ have good formal structures in our sense, then $\calE$ also has a good formal structure in Mochizuki's sense.
Consequently, for the totality of differential modules on a given class of schemes, the existences of resolutions of turning points in the two senses are equivalent; however, our subsequent interpretation of turning loci in terms of the irregularity b-divisor is only valid for good formal structures in the present sense.
\end{remark}

The main result of \cite{kedlaya-goodformal2} may then be stated as follows.
\begin{prop} \label{P:resolution of turning points}
With notation as in Definition~\ref{D:diff-module}, there exists a regularizing modification for $(X,Z)$ which is 
a resolution of turning points of $\calE$.
\end{prop}
\begin{proof}
See \cite[Theorem~8.1.3]{kedlaya-goodformal2}.
\end{proof}

\begin{remark}
The proof of Proposition~\ref{P:resolution of turning points} is an intricate valuation-theoretic calculation which gives very little control over the modification. The subsequent arguments in this paper give much more control of the modification, but as far as we know cannot be used to independently establish the existence of a resolution of turning points; they are thus largely complementary to the arguments of \cite{kedlaya-goodformal1, kedlaya-goodformal2}.
\end{remark}

\subsection{A local calculation}
\label{subsec:local purity}

We now make a local calculation, as described in the introduction, to obtain purity of the turning locus.
The description in the introduction refers to Newton polygons, but these appear only implicitly.

\begin{hypothesis} \label{H:dim 3 local comp}
Throughout \S\ref{subsec:local purity}, let $k$ be a field of characteristic $0$.
Fix an integer $n \geq 2$ and view $R := k \llbracket x_1,\dots,x_n \rrbracket$ as a nondegenerate differential ring via the derivations $\frac{\del}{\del x_1},\dots, \frac{\del}{\del x_n}$.
Put $X := \Spec(R)$, let $Z$ be the zero locus of $x_1\cdots x_n$, and let $z$ be the closed point of $X$.
Let $\calE$ be a $\nabla$-module of rank $d$ over $\calO_X(*Z)$ whose turning locus is contained in $\{z\}$.
\end{hypothesis}

\begin{defn} \label{D:parameter multiset}
For $x \in X$ not in the turning locus of $\calE$, set notation as in Definition~\ref{D:good formal structure},
then define the \emph{parameter multiset} of $\calE$ at $x$ to be the multisubset of $S/S_0$ obtained by including,
for each $\alpha \in I$,
the class of $\phi_\alpha$ with multiplicity $\rank \calR_\alpha$. This is independent of the choice of the decomposition, essentially because $E(\phi_\alpha - \phi_\beta)$ cannot be regular unless $\phi_\alpha - \phi_\beta \in S_0$.
\end{defn}

\begin{remark} \label{R:twist good formal}
Suppose that $\calE$ has empty turning locus; we may then take $x=z$ in Definition~\ref{D:good formal structure} and Definition~\ref{D:parameter multiset}. For any $\alpha \in R[x_1^{-1},\dots,x_n^{-1}]$ lifting an element of $S$,
one obtains a good formal structure for $\calE(-\alpha)$ at $x$ by twisting a good formal structure of $\calE$ at $x$;
consequently, $\calE(-\alpha)$ again has empty turning locus. By contrast, a more general twist does not
preserve good formal structures: any twist satisfies condition (ii) of Definition~\ref{D:good formal structure} but not necessarily condition (i).
\end{remark}

\begin{defn}
For $j=1,\dots,n$, let $\eta_j \in X$ be the generic point of the zero locus of $x_j$ in $X$.
Since $\eta_j$ is not in the turning locus, we may define $S_j$ to be the parameter multiset of $\calE$ at $\eta_j$.

For $j=1,\dots,n$, let $\eta'_j \in X$ be the generic point of the intersection of the components of $Z$ not containing $\eta_j$. Since $\eta'_j$ is also not in the turning locus, we may define $S'_j$ to be the parameter multiset of $\calE$ at $\eta'_j$. For $j' \neq j$, note that $S'_j$ projects to $S_{j'}$ by the well-definedness of the latter, although the exact matching of elements is in general not canonical due to the ring extension built into
Definition~\ref{D:good formal structure}.
\end{defn}

\begin{lemma} \label{L:parameter multiset}
For $\ell$ a finite extension of $k$ and $h$ a positive integer,
put
\[
R_{\ell,h} := \ell \llbracket x_1^{1/h},\dots,x_n^{1/h} \rrbracket.
\]
If $n \geq 3$, then there exists a unique multisubset $S$ of $\bigcup_{\ell,h} (R_{\ell,h}[x_1^{-1},\dots,x_n^{-1}]/R_{\ell,h})$
which projects to $S_j$ for $j=1,\dots,n$. (We refer to $S$ as the \emph{putative parameter multiset} of $\calE$ at $z$.)
\end{lemma}
\begin{proof}
Let $\overline{\alpha} \in S'_1$ be any element; it then lifts to an element
\[
\alpha \in \ell((x_1^{1/h}))\llbracket x_2^{1/h},\dots,x_n^{1/h} \rrbracket [x_2^{-1},\dots,x_n^{-1}]
\]
for some finite extension $\ell$ of $k$ and some positive integer $h$.
Choose any $j \in \{2,\dots,n\}$; since $n \geq 3$, we can choose an index $j' \in \{1,\dots,n\} \setminus \{1,j\}$ and equate both $S'_1$ and $S'_{j}$ with their projections to $S_{j'}$.
In so doing, we see that there exists a nonnegative rational number $e_1$ such that $x_1^{e_1} \alpha$
is integral over $R_{(x_1)}$; this implies that $\alpha \in R_{\ell,h}[x_1^{-1},\dots,x_n^{-1}]$.
By a similar argument, we see that we can choose $\ell,h$ so that $S'_1,\dots,S'_n$ are all multisubsets
of $R_{\ell,h}[x_1^{-1},\dots,x_n^{-1}]/R_{\ell,h}$.
Now the matching of the projections of $S'_1$ and $S'_j$ with $S_{j'}$ becomes canonical, so we obtain the desired result.
\end{proof}

We next introduce the numerical criterion for good formal structures
given in \cite[\S 4]{kedlaya-goodformal1}.
\begin{defn} \label{D:fi Fi}
For $r \in [0, +\infty)^n$, define the functions $f_i(\calE, r), F_i(\calE,r)$ for $i=1,\dots,d$
and $f_i(\End(\calE),r), F_i(\End(\calE),r)$ for $i=1,\dots,d^2$ as in \cite[Definition~3.2.1]{kedlaya-goodformal1};
in particular,  $f_i(*,\lambda r) = \lambda f_i(*,r)$ for all $\lambda \geq 0$ and $F_i(*,r) = f_1(*,r) + \cdots + f_i(*,r)$.
\end{defn}

\begin{lemma} \label{L:convex}
The functions $F_i(*, r)$ are continuous, piecewise linear, and convex.
\end{lemma}
\begin{proof}
Apply \cite[Theorem~3.2.2]{kedlaya-goodformal1}.
\end{proof}

\begin{lemma} \label{L:split off regular}
Suppose that there exists an index $i \in \{0,\dots,d-1\}$ such that:
\begin{enumerate}
\item[(i)]
$F_i(\calE,r)$ is linear in $r$;
\item[(ii)]
$f_{i+1}(\calE,r),\dots,f_d(\calE,r)$ are identically zero;
\item[(iii)]
either $i=0$, or $i>0$ and $f_i(\calE,r)$ is not identically zero.
\end{enumerate}
Then there exists a unique direct sum decomposition $\calE = \calE_1 \oplus \calE_2$
with $\rank(\calE_1) = i$ such that $\calE_2$ is regular.
\end{lemma}
\begin{proof}
Note that if $i>0$, then $F_{i-1}(\calE,r)$ is a convex function bounded above by the linear function $F_i(\calE,r)$; consequently, the two cannot agree at any point of $(0, +\infty)^n$. With this in mind, we may apply  \cite[Theorem~3.3.6]{kedlaya-goodformal1} to obtain the decomposition, and \cite[Theorem~4.1.4]{kedlaya-goodformal1} to see that $\calE_2$ is regular.
\end{proof}

\begin{remark} \label{R:turning locus decomposition}
We will use Lemma~\ref{L:split off regular} in conjunction with the following observation: if $\calE \cong \calE_1 \oplus \calE_2$ and $\calE_2$ is regular, then the turning loci of $\calE$ and $\calE_1$ coincide.
\end{remark}

\begin{lemma} \label{L:turning locus linearity}
The following statements are equivalent.
\begin{enumerate}
\item[(a)]
The turning locus of $\calE$ is empty.
\item[(b)]
The functions $F_1(\calE,r),\dots,F_d(\calE,r)$ and $F_{d^2}(\End(\calE),r)$ are linear in $r$.
\item[(c)]
The functions $F_d(\calE,r)$ and $F_{d^2}(\End(\calE),r)$ are linear in $r$.
\end{enumerate}
\end{lemma}
\begin{proof}
Apply \cite[Theorem~4.4.2]{kedlaya-goodformal1}. (See also Corollary~\ref{C:equality condition for determination} below.)
\end{proof}
\begin{remark} \label{R:End not linear}
In connection with Remark~\ref{R:mochizuki def}, we observe that if the turning locus of $\calE$ is empty, it
does not follow that $F_i(\End(\calE),r)$ is linear in $r$ for all of $i=1,\dots,d^2$;
this creates some complication in what follows.
See 
\cite[Example~4.4.5]{kedlaya-goodformal1} as well as 
Remark~\ref{R:End not linear} below.
\end{remark}

We may refine the statement of Lemma~\ref{L:turning locus linearity} as follows.
\begin{lemma} \label{L:admissible decomposition}
Suppose that $\calE$ has empty turning locus, and let $S$ be the parameter multiset of $\calE$ at $z$.
For $r \in [0, +\infty)^n$, let $v_r$ be the monomial valuation on $\bigcup_{\ell,h} R_{\ell,h}$ satisfying 
$v_r(x_i) = r_i$ for $i=1,\dots,n$. We then have equalities of multisets
\begin{align*}
\{f_i(\calE,r): i=1,\dots,d\} &= \{\max\{0,-v_r(\alpha)\}: \overline{\alpha} \in S\}, \\
\{f_i(\End(\calE),r): i=1,\dots,d^2\} &= \{\max\{0,-v_r(\alpha-\beta)\}: \overline{\alpha},\overline{\beta} \in S\},
\end{align*}
where $\alpha, \beta$ are lifts of $\overline{\alpha}, \overline{\beta}$.
\end{lemma}
\begin{proof}
Apply \cite[Lemma~2.5.3]{kedlaya-goodformal1} as in the proof of \cite[Theorem~4.4.2]{kedlaya-goodformal1}.
\end{proof}

\begin{lemma} \label{L:linear derivative}
Suppose that $n \geq 3$. 
For $j=1,\dots,n$, let $H_j$ be the set of $r = (r_1,\dots,r_n) \in [0, +\infty)^n$ for which $r_j = 0$. 
Then there exists an index $i_0 \in \{0,\dots,d\}$ for which the following statements hold.
\begin{enumerate}
\item[(a)]
The function $F_{i_0}(\calE,r)$ is linear in $r$ and identically equal to $F_d(\calE,r)$.
\item[(b)]
Either $i_0=0$, or $i_0 > 0$ and $f_{i_0}(\calE, r)|_{H_1 \cup \cdots \cup H_n}$ is not identically zero.
\end{enumerate}
\end{lemma}
\begin{proof}
Let $S$ be the putative parameter multiset of $\calE$ at $z$.
By Lemma~\ref{L:admissible decomposition},
we have an equality of multisets
\begin{equation} \label{eq:identify slopes1}
\{f_i(\calE,r): i=1,\dots,d\} = \{\max\{0,-v_{r}(\alpha)\}: \overline{\alpha} \in S\} \qquad (r \in H_1 \cup \cdots \cup H_n).
\end{equation}
For $j=1,\dots,n$, let $i_j \in \{0,\dots,m\}$ be the minimum index for which $F_{i_j}|_{H_j} = F_d|_{H_j}$ and 
put $i_0 := \max_j\{i_j\}$; then \eqref{eq:identify slopes1} yields
\begin{equation} \label{eq:identify slopes2}
F_{i_0}(\calE,r) = F_d(\calE,r) = -\sum_{\overline{\alpha} \in S} \max\{0,-v_{r}(\alpha)\} \qquad  (r \in H_1 \cup \cdots \cup H_n).
\end{equation}
If $i_0 = 0$, then Lemma~\ref{L:convex} implies that $F_d(\calE,r)$ is identically zero, which proves the claim in this case; we thus assume hereafter that $i_0 > 0$.
Choose $j$ for which $i_0 = i_j$; 
the restriction of $f_{i_0}(\calE,r)$ to $H_j$ is a linear (by \eqref{eq:identify slopes1}) function which is not identically zero,
so there exists another index $j' \neq j$ such that $f_{i_0}(\calE, \be_{j'}) \neq 0$.
By relabeling coordinates, we may reduce to the case where $j' = 1$; this forces $i_2 = \cdots = i_n = i_0$.

Let $T$ be the subset of $S$ consisting of elements which do not project to zero in $S_1$.
By Proposition~\ref{P:resolution of turning points} (see also Remark~\ref{R:eliminate resolution} below),
for $g$ a sufficiently large positive integer, the pullback of $\calE$
to $\Spec(k \llbracket x_1/(x_2 \cdots x_n)^g, x_2,\dots,x_n \rrbracket)$ has empty turning locus.
The parameter multiset of this pullback has the same projection to $S_1$ as $S$ does;
by applying Lemma~\ref{L:admissible decomposition} to the pullback, we obtain a neighborhood $U$ of $\be_1$ in $[0, +\infty)^n$ such that for $r \in U$,
\begin{equation} \label{eq:neighborhood fi}
\{f_i(\calE,r): i=1,\dots,i_0\} = \{-v_r(\alpha): \overline{\alpha} \in T\}.
\end{equation}
(Note that we cannot say anything here about $f_i(\calE,r)$ for $i > i_0$.)
For $s_2,\dots,s_n \geq 0$, by \eqref{eq:identify slopes2} we have
\begin{equation} \label{eq:identify slopes3}
\left. \frac{d}{dt} F_{i_0}(\calE, (1,ts_2,\dots,ts_n))\right|_{t=0^+}
= \sum_{\overline{\alpha} \in T} -v_{(0,s_2,\dots,s_n)}(\alpha) \\
= F_{i_0}(\calE, (0,s_2,\dots,s_n)) 
\end{equation}
when at least one of $s_2,\dots,s_n$ vanishes.
Since both sides of \eqref{eq:identify slopes3}
are linear in $s_2,\dots,s_n$ (the left by \eqref{eq:neighborhood fi}, the right by 
\eqref{eq:identify slopes1}),
\eqref{eq:identify slopes3} remains true for arbitrary $s_2,\dots,s_n \geq 0$.
By convexity (Lemma~\ref{L:convex}), we must have
\[
F_{i_0}(\calE, (1,ts_2,\dots,ts_n)) = F_{i_0}(\calE, \be_1) + t F_{i_0}(\calE, (0,s_2,\dots,s_n))
\]
for all $t \geq 0$; it follows that $F_{i_0}(\calE,r)$ is linear in $r$.
Meanwhile, we have the inequality
\[
F_d(\calE,r) \geq F_{i_0}(\calE,r)
\]
in which the left-hand side is convex, the right-hand side is linear, and equality holds for $r \in 
H_1 \cup \cdots \cup H_n$; we thus have equality for all $r$, proving the claim.
\end{proof}

\begin{remark} \label{R:End not linear}
Lemma~\ref{L:linear derivative} does not hold with $\calE$ replaced by $\End(\calE)$.
To see this, consider the example (with $n=3, d=6$) given by
\begin{gather*}
\calE = E(x_1^{-3} x_2^{-3} x_3^{-3})
\oplus
E(x_1^{-3} x_2^{-3} x_3^{-3} + x_1^{-1})
\oplus
E(x_1^{-2} x_2^{-2} x_3^{-2}) \\
\oplus
E(x_1^{-2} x_2^{-2} x_3^{-2} + x_2^{-1})
\oplus
E(x_1^{-1} x_2^{-1} x_3^{-1})
\oplus
E(x_1^{-1} x_2^{-1} x_3^{-1} + x_3^{-1});
\end{gather*}
in this case, $F_{d^2-10}(\calE,r) = F_{d^2}(\calE,r)$ and $f_{d^2-10}(\calE,r)$ is not identically zero, but does vanish for $r = \be_1,\be_2,\be_3$.
\end{remark}

\begin{lemma} \label{L:no turning locus codim 3}
Suppose that $n \geq 3$. Then the turning locus of $\calE$ is empty.
\end{lemma}
\begin{proof}
We proceed by induction on $d$. 
Let $S$ be the putative parameter multiset of $\calE$ at $z$.
Suppose first that $S$ contains only the zero element;
Lemma~\ref{L:admissible decomposition} then implies that $F_d(\calE,r) = 0$ for $r \in H_1 \cup \cdots \cup H_n$,
and Lemma~\ref{L:convex} then implies that $F_d(\calE,r)$ is identically zero.
By Lemma~\ref{L:split off regular}, $\calE$ is regular and we are done.

Suppose next that $S$ contains a nonzero element $\overline{\alpha}$. For the purposes
of checking the criterion of Lemma~\ref{L:turning locus linearity}, there is no harm in enlarging
$k$ or adjoining roots of $x_1,\dots,x_n$; we may thus assume without loss of generality that $\overline{\alpha}$ is the class of an element $\alpha \in R[x_1^{-1},\dots,x_n^{-1}]$. 
By Remark~\ref{R:twist good formal}, the turning locus of $\calE(-\alpha)$ is again contained in $\{z\}$,
and the putative parameter set of  $\calE(-\alpha)$  at $z$ is $S(-\alpha) := \{\overline{\beta}-\overline{\alpha}: \overline{\beta} \in S\}$.
By Lemma~\ref{L:linear derivative}, $F_d(\calE(-\alpha),r)$ is linear;
moreover, the value $i_0$ in Lemma~\ref{L:linear derivative} cannot equal $d$ because $0 \in S(-\alpha)$.
We may thus apply Lemma~\ref{L:split off regular} to split off a regular summand from $\calE(-\alpha)$,
then apply the induction hypothesis and Remark~\ref{R:turning locus decomposition} to deduce that 
the turning locus of $\calE(-\alpha)$ is empty. Since $\End(\calE) = \End(\calE(-\alpha))$,
Lemma~\ref{L:turning locus linearity} implies that $F_{d^2}(\End(\calE), r)$ is linear;
we may now apply Lemma~\ref{L:turning locus linearity}
again to deduce that the turning locus of $\calE$ is empty.
\end{proof}

\subsection{Globalization}

Our local calculation immediately globalizes to give a purity theorem for turning loci.
We recall from the introduction that in the case where $Z$ is smooth, the following result specializes to a theorem of Andr\'e \cite[Corollaire~3.4.3]{andre} modulo a change of categories (see Remark~\ref{R:transfer results}).

\begin{theorem} \label{T:purity of the turning locus}
Let $X$ be a nondegenerate differential scheme. Let $Z$ be a closed subscheme of $X$ such that $(X,Z)$ is a regular pair. Let $\calE$ be a $\nabla$-module over $\calO_X(*Z)$. Then the turning locus of $\calE$ on $X$ is a closed subscheme of $X$ of pure codimension $2$.
\end{theorem}
\begin{proof}
Suppose to the contrary that there exists an irreducible component of the turning locus of codimension $n \geq 3$.
Let $\eta$ be the generic point of this component, put $X' := \Spec(\calO_{X,\eta})$,
let $Z'$ be the pullback of $Z$ to $X$ as a Cartier divisor, and let $f: X' \to X$ be the canonical morphism.
Then $f^* \calE$ is a $\nabla$-module over $\calO_{X'}(*Z')$ whose turning locus consists of the closed point of $X'$. By taking formal completions and then applying Lemma~\ref{L:no turning locus codim 3} for the given value of $n$,
we deduce a contradiction.
\end{proof}

As observed in the introduction, this allows us to control the resolution of turning points
given by Proposition~\ref{P:resolution of turning points}.
\begin{cor} \label{C:resolution procedure1}
Let $X$ be a nondegenerate differential scheme. Let $Z$ be a closed subscheme of $X$ such that $(X,Z)$ is a regular pair. Let $\calE$ be a $\nabla$-module over $\calO_X(*Z)$. Define the modifications $f_n: X'_n \to X_n$, $g_n: X_{n+1} \to X_n$ as follows.
\begin{itemize}
\item
Set $X_0 = X$. Given $f_0,g_0,\dots,f_{n-1},g_{n-1}$ (resp.\ $f_0,g_0,\dots,f_{n-1},g_{n-1},f_n$),
let $Z_n$ (resp.\ $Z'_n$) be the inverse image of $Z$ in $X_n$ (resp.\ $X'_n$).
\item
Let $f_n$ be an embedded resolution of singularities of $(X_n, Z_n)$; that is, $f_n$ is a modification such that
$(X'_n, Z'_n)$ is a regular pair. (By Lemma~\ref{L:desing1}, such a modification always exists.)
\item
Let $g_n$ be the blowup of $X'_n$ in the reduced turning locus of the pullback of $\calE$ to $X'_n$.
\end{itemize}
Then for some $n_0$, the maps $g_n$ are isomorphisms for all $n \geq n_0$. For any $n \geq n_0$,
$X'_n \to X$ is a resolution of turning points of $\calE$.
\end{cor}
\begin{proof}
We may assume from the outset that $X$ is irreducible.
Suppose by way of contradiction that no such $n_0$ exists; this in particular means that for each $n$, the turning locus of the pullback of $\calE$ to $X'_n$ is a nonempty closed subset of $X'_n$, which we denote by $T_n$.
By Theorem~\ref{T:purity of the turning locus}, $T_n$ is of pure codimension 2 in $X'_n$.

By Proposition~\ref{P:resolution of turning points}, we may choose a resolution of turning points $h: Y \to X$ of $\calE$. Let $S$ be the finite set of divisorial valuations of $X$ corresponding to exceptional divisors of $h$.

For each $n$, let $h_n: Y_n \to X'_n$ be the proper transform of $h$ along $X'_n \to X$.
Let $\eta$ be the generic point of some component of $T_n$; then $h_n$ cannot be flat at $\eta$, or else
$\eta$ would not be a turning point. In particular, the inverse image of $\eta$ must be contained in some exceptional divisor of $h_n$, corresponding to some $v \in S$. Since $T_n$ is of pure codimension 2, the image of this exceptional divisor in $X'_n$ must be the closure of $\eta$ rather than some larger closed subspace.

Now note that any given $v \in S$ can occur only finitely many times in this fashion. This amounts to the following observation: for $X$ regular and excellent of dimension 2, any blowup can eventually be flattened by repeatedly blowing up in the reduced center. Since $S$ is itself finite, this yields the desired contradiction.
\end{proof}

\begin{remark} \label{R:eliminate resolution}
As written, the proof of Lemma~\ref{L:linear derivative} relies on resolution of turning points,
namely in the invocation of Proposition~\ref{P:resolution of turning points} to show that one can eliminate the turning locus by pulling back to $\Spec(k \llbracket x_1/(x_2 \cdots x_n)^g, x_2,\dots,x_n \rrbracket)$ for $g$ sufficiently large. However, it should be possible to give a more elementary proof of this by emulating the proof of \cite[Theorem~4.3.4]{kedlaya-goodformal2}. This in turn raises the possibility of using purity of the turning locus as the basis of a more elementary proof of resolution of turning points, in which one gives some other argument
(e.g., a finiteness argument based on considerations of cohomology) to establish the termination of 
the procedure described in Corollary~\ref{C:resolution procedure1}. We leave this as a question for future consideration.
\end{remark}

\section{Irregularity b-divisors}
\label{sec:irregularity}

In this section, we recast the main results of \cite{kedlaya-goodformal1, kedlaya-goodformal2} in the language of b-divisors, show that irregularity b-divisors are nef (Theorem~\ref{T:nef}), and use this to construct irregularity sheaves.

\setcounter{theorem}{0}
\begin{hypothesis}
Throughout \S\ref{sec:irregularity}, let $X$ be a nondegenerate differential scheme, let $Z$ be a closed subscheme of $X$ containing no component of $X$, and let $\calE$ be a $\nabla$-module of rank $d$ over $\calO_X(*Z)$.
\end{hypothesis}

\subsection{Irregularity b-divisors}
\label{subsec:irregularity}

\begin{defn}
By Definition~\ref{D:finiteness condition}, there exists a unique
b-divisor
$\Irr(\calE) \in \bDiv X$ such that for any modification $f: Y \to X$ with $Y$ normal and any prime divisor $E$ of $Y$,
the irregularity of $f^* \calE$ along $E$ equals the multiplicity of $\Irr(\calE)$ along $E$. 
We will see shortly that in fact $\Irr(\calE) \in \bCDiv X$ (Corollary~\ref{C:irregularity is Cartier});
we call $\Irr(\calE)$ the \emph{irregularity (Cartier) b-divisor} of $\calE$.
\end{defn}

\begin{prop} \label{P:numerical criterion}
Let $f: Y \to X$ be a regularizing modification of $(X,Z)$.
Then $f$ is a resolution of turning points of $\calE$ if and only if 
$\Irr(f^* \calE)$ and $\Irr(f^* \End(\calE))$ belong to the image of $\CDiv Y \to \bCDiv X\to \bDiv X$.
\end{prop}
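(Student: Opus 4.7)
The plan is to view this as a birational reinterpretation of the numerical criterion for good formal structures established in \cite{kedlaya-goodformal1}. Both conditions in the statement are visibly local on $Y$ (the turning locus is Zariski closed, and the Cartier locus of a b-divisor is open), so it suffices to fix a closed point $y \in Y$ and check that $f^*\calE$ admits a good formal structure at $y$ if and only if $\Irr(f^*\calE)$ and $\Irr(f^*\End(\calE))$, viewed in $\bCDiv_\ZZ Y$, descend to Cartier divisors on $Y$ in a neighborhood of $y$. Because $f$ is a regular modification, $(Y,W)$ is a regular pair, so near $y$ we may choose local coordinates $z_1,\dots,z_n$ with $W$ cut out by $z_1 \cdots z_r$.

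For the forward direction, suppose $f^*\calE$ has a good formal structure at $y$. After passing to a suitable finite cover ramified only along $W$, the completion of $f^*\calE$ at $y$ decomposes as a direct sum of twists of regular connections by rank-one modules with connection forms of the shape $d\phi_\alpha$ with $\phi_\alpha$ a Laurent monomial in $z_1,\dots,z_r$. For any divisorial valuation $v$ on a further modification of $Y$ centered at $y$, the irregularity along $v$ of such a twisted piece is simply $\max(0, -v(\phi_\alpha))$ when $\phi_\alpha \neq 0$; these values are $\ZZ$-linear functions of $v(z_1),\dots,v(z_r)$. In particular, the irregularity of $f^*\calE$ along any prime divisor of a modification of $Y$ is determined by the multiplicities along the strict transforms of the $W$-components, which is exactly what it means for $\Irr(f^*\calE)$ to be the pullback of a Cartier divisor on $Y$ supported on $W$. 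The same argument applied to $\End$ of each summand and to differences of distinct summands handles $\Irr(f^*\End(\calE))$.

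For the backward direction, assume both irregularity b-divisors are Cartier on $Y$ in a neighborhood of $y$, so their traces on $Y$ are Cartier divisors supported on $W$. Then for every divisorial valuation $v$ centered at $y$ (on any further modification), the irregularity of $f^*\calE$ and of $f^*\End(\calE)$ along $v$ is a linear combination of $v(z_1),\dots,v(z_r)$ with nonnegative coefficients read off from the multiplicities of the traces. This is precisely the hypothesis of the numerical criterion of \cite[Theorem~4.4.1]{kedlaya-goodformal1} (and its refinement for $\End$): the irregularity and the irregularity of $\End$ extend to piecewise-linear functions on the space of monomial valuations in $(z_1,\dots,z_r)$, and they are in fact linear, which is the condition ensuring existence of a good formal structure. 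Applying this criterion yields a good formal structure for $f^*\calE$ at $y$, so $y$ is not in the turning locus.

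The main technical obstacle is the backward direction: one must check that the Cartier condition really reproduces the full hypothesis of the numerical criterion in \cite{kedlaya-goodformal1}, which is phrased in terms of linearity of irregularity on a certain polyhedral fan of monomial valuations rather than in the b-divisor language. This is a matter of unpacking definitions, but it is where the content resides; once one has matched the two formulations, invoking the existence half of the criterion from \cite{kedlaya-goodformal1} concludes the proof. Corollary~\ref{C:irregularity is Cartier}, asserting that $\Irr(\calE)$ is always Cartier (which should be proved right after this proposition by combining it with the existence of resolutions of turning points from \cite{kedlaya-goodformal2}), is an immediate consequence of the forward direction.
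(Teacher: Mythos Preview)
Your approach is essentially the paper's: the proposition is nothing more than a birational repackaging of the numerical criterion for good formal structures, and the paper's entire proof is the single sentence ``This is immediate from \cite[Proposition~5.2.3]{kedlaya-goodformal2}.'' What you have written is an attempt to unpack that citation, and the outline (localize at $y$, use the good decomposition to see linearity of irregularity along exceptional valuations, then invoke the numerical criterion for the converse) is exactly the content of that proposition.

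One correction is worth making. You appeal to \cite[Theorem~4.4.1]{kedlaya-goodformal1}, but the present statement is set over an arbitrary nondegenerate differential scheme, which is the framework of \cite{kedlaya-goodformal2}; the correct reference is \cite[Proposition~5.2.3]{kedlaya-goodformal2}, which is the version of the numerical criterion formulated and proved in that generality. The criterion in \cite{kedlaya-goodformal1} is stated for complete local rings of a more restricted type, so citing it here leaves a gap unless you also explain the passage to the general excellent setting (which is precisely what \cite{kedlaya-goodformal2} does). A smaller point: in your forward direction you describe the $\phi_\alpha$ as ``Laurent monomials,'' but in a good decomposition they are only required to have a dominant monomial term; the linearity of $v \mapsto -v(\phi_\alpha)$ on the relevant cone follows from that dominance, not from $\phi_\alpha$ being a single monomial.
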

\begin{proof}
This is immediate from \cite[Proposition~5.2.3]{kedlaya-goodformal2}.
\end{proof}

\begin{cor} \label{C:irregularity is Cartier}
The irregularity b-divisor $\Irr(\calE)$ is a Cartier b-divisor on $X$.
\end{cor}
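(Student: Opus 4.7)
The plan is to exhibit a determination of $\Irr(\calE)$ directly: we will produce a modification $f: Y \to X$ on which the b-divisor $\Irr(\calE)$ is represented by an honest Cartier divisor. The natural candidate is a resolution of turning points, since Proposition~\ref{P:numerical criterion} already packages precisely the statement we want for such a modification.

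First, I would invoke the main existence result from \cite{kedlaya-goodformal2}, which guarantees that after replacing $X$ by a suitable blowup we can trivialize the turning locus. Concretely, there exists a projective regular modification $f: Y \to X$ which is a resolution of turning points for $\calE$, i.e., such that the turning locus of $f^* \calE$ is empty. (If needed, one first applies Theorem~\ref{T:desing2} to replace a given modification by a regular one without introducing new turning points, since regularity of the pair $(Y,W)$ is the ambient hypothesis for the definition of good formal structures.)

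Next, I would apply Proposition~\ref{P:numerical criterion} to this $f$. Because $f$ is a resolution of turning points, the proposition asserts that $\Irr(f^* \calE)$ lies in the image of $\CDiv_\ZZ Y \to \bCDiv_\ZZ X$; in other words, there is a Cartier divisor on $Y$ whose associated b-divisor equals $\Irr(f^* \calE)$.

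Finally, to convert this into the desired conclusion about $\Irr(\calE)$ itself, I would verify the compatibility $f^* \Irr(\calE) = \Irr(f^* \calE)$ as b-divisors on $Y$. This is a formal consequence of the definition of $\Irr$ as a function on $\RZ^{\divis}(X)$ together with the identification $\RZ^{\divis}(Y) = \RZ^{\divis}(X)$: any divisorial valuation on $Y$ is a divisorial valuation on $X$, and the irregularity along a given prime divisor of a further modification only depends on the pulled-back connection at the generic point of that divisor, which is the same whether we compute it starting from $X$ or from $Y$. Combining this compatibility with the previous step, $\Irr(\calE)$ becomes Cartier on $Y$, so it lies in $\bCDiv_\ZZ X$. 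The main substantive obstacle here is really external to this corollary, namely the existence of resolutions of turning points established in \cite{kedlaya-goodformal2}; granted that input, and granted Proposition~\ref{P:numerical criterion}, the corollary is a matter of tracing through definitions.
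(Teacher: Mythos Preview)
Your proposal is correct and follows essentially the same approach as the paper: the paper's proof is simply ``This is immediate from Proposition~\ref{P:numerical criterion} plus the existence of a resolution of turning points \cite[Theorem~8.1.3]{kedlaya-goodformal2}.'' Your additional step verifying the compatibility $f^* \Irr(\calE) = \Irr(f^* \calE)$ just makes explicit the identification of b-divisors under $\bDiv_\ZZ Y \cong \bDiv_\ZZ X$ that the paper leaves implicit.
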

\begin{proof}
This is immediate from Proposition~\ref{P:numerical criterion} plus the existence of a resolution of turning points
(Proposition~\ref{P:resolution of turning points}).
\end{proof}

Since $\Irr(\calE)$ is Cartier, we may formally restate Proposition~\ref{P:numerical criterion} as follows.
\begin{theorem} \label{T:numerical criterion}
For $f: Y \to X$ a regularizing modification of $(X,Z)$, the turning locus of $f^* \calE$ is the union of the non-Cartier loci of $\Irr(f^* \calE)$ and $\Irr(f^* \End(\calE))$.
Consequently, $f$ is a resolution of turning points if and only if $f$ is a determination
of both $\Irr(\calE)$ and $\Irr(\End(\calE))$. 
\end{theorem}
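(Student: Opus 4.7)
The plan is to deduce both assertions of the theorem formally from Proposition~\ref{P:numerical criterion}. The second assertion is immediate from the first: $f$ is a resolution of turning points precisely when the turning locus of $f^*\calE$ is empty, and $f$ is a determination of a given Cartier b-divisor on $X$ precisely when the corresponding non-Cartier locus on $Y$ is empty. Once the union of the two non-Cartier loci is identified with the turning locus, the two emptiness conditions coincide, yielding the stated equivalence.

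For the first assertion --- the pointwise coincidence of the turning locus with the union of the two non-Cartier loci --- I would proceed by a localization of Proposition~\ref{P:numerical criterion}. Since both loci are closed in $Y$, it suffices to show that for each open $U \subseteq Y$, the condition ``$U$ avoids the turning locus of $f^*\calE$'' is equivalent to ``$U$ is contained in the Cartier loci of both $\Irr(f^*\calE)$ and $\Irr(f^*\End(\calE))$''. Setting $W = f^{-1}(Z)$, I would apply Proposition~\ref{P:numerical criterion} to the restricted $\nabla$-module $f^*\calE|_U$ over the pair $(U, W \cap U)$, with regular modification taken to be the identity $\id_U: U \to U$. The identity is a resolution of turning points of $f^*\calE|_U$ if and only if both restricted irregularity b-divisors lie in $\CDiv_\ZZ U$. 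The former condition is equivalent to $U$ missing the turning locus of $f^*\calE$ (since this locus is defined pointwise via failure of a good formal structure at individual points), while the latter is equivalent to $U$ lying in both Cartier loci, which completes the reduction.

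The argument is essentially formal once Proposition~\ref{P:numerical criterion} is in hand; the main obstacle --- such as it is --- is the routine verification that the hypotheses of \S\ref{subsec:irregularity} persist under restriction to an open subscheme. Specifically, one needs that an open subscheme of a nondegenerate differential scheme is again such, and that $W \cap U$ contains no connected component of $U$ (which holds because $W$ contains none of $Y$). I would expect this to follow directly from the framework of \cite{kedlaya-goodformal2} without further work, consistent with the author's characterization of the theorem as a ``formal restatement'' of the preceding proposition.
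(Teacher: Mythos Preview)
Your proposal is correct and matches the paper's approach: the paper gives no separate proof at all, merely introducing the theorem as a ``formal restatement'' of Proposition~\ref{P:numerical criterion} once one knows (via Corollary~\ref{C:irregularity is Cartier}) that the irregularity b-divisors are Cartier, so that their non-Cartier loci are defined. Your localization argument is exactly the content of this formal restatement made explicit, and the routine checks you flag (stability of nondegeneracy under open immersion, regularity of the pair $(U, W\cap U)$) are indeed immediate from the setup.
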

\begin{cor}
With no conditions on $Z$, the turning locus of $\calE$ is a closed subset of $X$ of codimension at least $2$.
(Recall that by Theorem~\ref{T:purity of the turning locus}, when $(X,Z)$ is a regular pair, the turning locus is of pure codimension $2$.)
\end{cor}
\begin{proof}
By Lemma~\ref{L:desing1}, there exists a regularizing modification $f: Y \to X$ for $(X,Z)$
which is an isomorphism in codimension 1.
Applying Theorem~\ref{T:numerical criterion} then yields the claim.
\end{proof}

\subsection{Irregularity sheaves}

We now check that the irregularity b-divisor is nef, and thus obtain the existence of irregularity sheaves.

\begin{theorem} \label{T:nef}
The Cartier b-divisor $\Irr(\calE)$ is nef.
\end{theorem}
\begin{proof}
By Lemma~\ref{L:condition for nef}, it suffices to check the following: for $f: Y \to X$ the blowup along a regular 
center $W$, the multiplicity $m$ of $\Irr(\calE)$ along the exceptional divisor of $f$ is nonpositive.
For this purpose, we may calculate at the generic point of $W$; that is, we  may assume that $X$ is the spectrum of a regular local ring and $W$ is its closed point. In this case, with notation as in Definition~\ref{D:fi Fi},
we have
\[
m = F_d(\calE, (1,\dots,1)) - F_d(\calE, (1,0,\dots,0)) - \cdots - F_d(\calE,(0,\dots,0,1));
\]
by the convexity of $F_d(\calE,r)$ as a function of $r$ \cite[Theorem~4.4.2]{kedlaya-goodformal1}, this quantity is nonpositive. This proves the claim.
\end{proof}

\begin{remark}
It is possible, but somewhat more complicated, to check directly that the degree of $\Irr(\calE)$ is nonnegative on any contracted curve. This argument would be analogous to an argument about $p$-adic connections made in \cite[Proposition~4.1.3]{kedlaya-swan2}; we refrain from including it here.
\end{remark}

Having just applied \cite[Theorem~4.4.2]{kedlaya-goodformal1}, we may now turn around and state a general result of which that statement is a special case.
\begin{cor} \label{C:equality condition for determination}
Let $f: Y \to X$ be a regularizing modification of $(X,Z)$. Let $D \in \CDiv_Y$ be the divisor supported on $f^{-1}(Z)$ in which the multiplicity of each prime divisor $E$ is the irregularity of $f^*\calE$ along $E$.
Then $\Irr(\calE) \leq D$, with equality if and only if $f$ is a determination of $\Irr(\calE)$.
\end{cor}
\begin{proof}
Combine Lemma~\ref{L:equality condition for determination} with Theorem~\ref{T:nef}.
\end{proof}

\begin{defn} \label{D:irregularity sheaf}
By Lemma~\ref{L:nef} and Theorem~\ref{T:nef}, the b-divisor $\Irr(\calE)$ is basepoint-free, and hence equals the b-divisor associated to the coherent ideal sheaf $\calI_X(\Irr(\calE))$. We refer to the latter as the \emph{irregularity sheaf} of $\calE$.
\end{defn}

Using irregularity sheaves, we obtain a second natural construction of resolutions of turning points.
\begin{cor} \label{C:resolution procedure2}
Let $f_1: X_1 \to X$ be the blowup of $X$ along the irregularity sheaf of $\calE$.
Let $f_2: X_2 \to X$ be the blowup of $X$ along the irregularity sheaf of $\Irr(\calE)$.
Let $g: Y \to X_1 \times_X X_2$ be a modification such that the composition $Y \to X$ is a regularizing modification of $(X,Z)$. Then $g$ is a resolution of turning points of $\calE$.
\end{cor}
\begin{proof}
This is immediate from Theorem~\ref{T:numerical criterion}.
\end{proof}

We also recover Andr\'e's semicontinuity theorem \cite[Corollaire~7.1.2]{andre}
modulo a change of categories (see Remark~\ref{R:transfer results}).
\begin{cor} \label{C:semicontinuity}
Suppose that $f: X \to S$ be a smooth morphism of nondegenerate differential schemes of relative dimension $1$
with connected fibers and that $Z$ is finite over $S$. Then the function assigning to a point
$x \in S$ the sum of the irregularities of $\calE|_{f^{-1}(x)}$ at all points $z \in Z \cap f^{-1}(x)$ 
is lower semicontinuous (in particular, it can only jump down under specialization).
\end{cor}
\begin{proof}
The function in question is locally constant away from the image in $S$ of the turning locus of $\calE$; it thus suffices to confirm the behavior under specialization. By pushing forward along a suitable finite morphism, we may further reduce to the case where $Z$ is a section of $f$. In this case, let $z \in Z$ be the unique preimage of $x \in S$ and put $C := f^{-1}(X)$. Let $g: Y \to X$ be a regularizing modification of $(X,Z)$ which is a resolution of turning points, and for which the proper transform $\tilde{C}$ of $C$ has transverse intersection with $g^{-1}(Z)$.
We may then compute the irregularity of $\calE|_C$ at $z$ on $\tilde{C}$ instead; the result is the multiplicity of the irregularity b-divisor of $\calE$ along the component of $g^{-1}(Z)$ meeting $\tilde{C}$. By Corollary~\ref{C:equality condition for determination}, we deduce the claim.
\end{proof}

\begin{remark}
Corollary~\ref{C:semicontinuity} does not require the full strength of the construction of irregularity sheaves.
It was previously observed by Sabbah \cite[Corollaire~3.2.4]{sabbah} that one can deduce the same assertion directly from the existence of resolution of turning points on surfaces.
\end{remark}

\section{Functoriality and change of categories}

We finally discuss functoriality for regular morphisms and describe functorial resolutions of turning points
in various geometric categories.

\subsection{Regular morphisms}

\begin{defn}
A morphism of rings $R \to S$ is \emph{regular} if it is flat and, for each prime ideal $\frakp$ of $R$, the fiber ring $S \otimes_R \kappa(\frakp)$ is noetherian and geometrically regular over $\kappa(\frakp)$. The geometrically regular condition means that for every finite extension $\ell$ of $\kappa(\frakp)$, $S \otimes_R \ell$ is regular;
this condition is only nontrivial for inseparable extensions \cite[Tag~038U]{stacks-project}, so for $\QQ$-algebras
it reduces to $S \otimes_R \kappa(\frakp)$  being regular.

A morphism $Y \to X$ of scheme is \emph{regular} if it is flat and, for each point $x \in X$,
the scheme $Y \times_X \Spec(\kappa(x))$ is locally noetherian and geometrically regular over $\kappa(x)$.
See \cite[\S 33]{matsumura-alg}, \cite[D\'efinition~6.8.2]{ega4-2}, or \cite[Tag~07R6]{stacks-project} for further discussion.
\end{defn}

\begin{lemma}\label{L:regular maximal}
For $R \to S$ a ring homomorphism, the following conditions are equivalent.
\begin{enumerate}
\item[(a)]
The morphism $R \to S$ is regular.
\item[(b)]
For each prime ideal $\frakq$ of $S$ lying over a prime ideal $\frakp$ of $R$, $R_\frakp \to S_\frakq$ is regular.
\item[(c)]
For each maximal ideal $\frakq$ of $S$ lying over a maximal ideal $\frakp$ of $R$, $R_\frakp \to S_\frakq$ is regular.
\end{enumerate}
\end{lemma}
\begin{proof}
See \cite[Tag~07C0]{stacks-project}.
\end{proof}

\begin{remark} \label{R:fppf}
For $X$ a scheme, the property of a morphism $f: Y \to X$ of schemes being regular is local with respect to the fppf topology on $X$ \cite[Tag~07RA]{stacks-project}. 
\end{remark}

\begin{remark} \label{R:smooth}
A morphism of schemes which is locally of finite presentation is smooth if and only if it is regular
(see \cite[Th\'eor\`eme~17.5.1]{ega4-4} or \cite[Tag~07R9]{stacks-project}).
In particular, a scheme locally of finite type over a field $K$ of characteristic 0 is smooth over $K$ if and only if it is regular.
\end{remark}

\begin{remark}
The definition of a regular morphism arises naturally as the relative version of regularity
for individual schemes. Unfortunately, it contravenes an entrenched convention in algebraic geometry, 
in which the term \emph{regular morphism} is used as an emphatic term for a \emph{morphism}, to contrast it with
a \emph{rational morphism} which is not really a morphism at all (being defined only on a dense open subset of the domain).
We will make no use of this convention.
\end{remark}

\subsection{Functorial resolution of singularities}

In \cite{kedlaya-goodformal2}, extensive use was made of the fact that
quasiexcellent $\QQ$-schemes admit nonembedded
and embedded desingularization; this was originally proposed by
Grothendieck, but only recently verified by Temkin \cite{temkin-excellent}.
In order to transfer resolution of turning points from schemes to other categories, we need
to perform resolutions of singularities in a manner which is functorial with respect to regular morphisms.
This can be achieved using approximation arguments, provided that one starts with a resolution algorithm for varieties over a field of characteristic 0 in which one repeatedly blows up so as to reduce some local invariant.
While this description does not apply to Hironaka's original proof of resolution of singularities,
it applies to several subsequent arguments, such as the method of Bierstone--Milman
\cite{bierstone-milman} as refined by Bierstone, Milman, and Temkin \cite{bmt}.
Using this method, Temkin has established the following
functorial desingularization theorems. (Temkin also obtains some control over the sequence of blowups used;
we have not attempted to exert such control in the following statements.)

\begin{defn}
Let $\Sch$ be the category of schemes.
Let $\Sch'$ be the category of schematic pairs.
\end{defn}

\begin{theorem}[Temkin] \label{T:desing1}
Let $\calC$ be the subcategory of $\Sch$ whose objects are the
reduced integral noetherian excellent $\QQ$-schemes,
and whose morphisms are the regular morphisms of schemes.
Let $\iota: \calC \to \Sch$ denote the inclusion.
There then exist a covariant functor $Y: \calC \to \Sch$ and a natural transformation $F: Y \to \iota$
satisfying the following conditions.
\begin{enumerate}
\item[(a)]
For each $X \in \calC$,  the morphism $F(X): Y(X) \to X$ is a projective regularizing modification.
\item[(b)]
For each regular $X \in \calC$, $F(X)$ is an isomorphism.
\item[(c)]
For each morphism $f: X' \to X$ in $\calC$, the square
\[
\xymatrix{
Y(X') \ar^{Y(f)}[r] \ar^{F(X')}[d] & Y(X) \ar^{F(X)}[d] \\
X' \ar^{f}[r] & X
}
\]
is cartesian in $\Sch$.
\end{enumerate}
\end{theorem}
\begin{proof}
See \cite[Theorem~1.2.1]{temkin1}.
\end{proof}

\begin{theorem}[Temkin] \label{T:desing2}
Let $\calC'$ be the subcategory of $\Sch'$ whose objects are the
pairs for which the underlying schemes are regular integral noetherian excellent $\QQ$-schemes,
and whose morphisms are those for which the underlying morphisms of schemes are regular.
Let $\iota': \calC' \to \Sch'$ denote the inclusion.
Then there exist a covariant functor $(Y,W): \calC' \to \Sch'$ and 
a natural transformation $F: (Y, W) \to \iota$ satisfying the following conditions.
\begin{enumerate}
\item[(a)]
For each $(X, Z) \in \calC'$, the morphism $F(X,Z): Y(X,Z) \to X$ is a projective regularizing modification of $(X,Z)$ (and even a sequence of blowups along regular centers).
\item[(b)]
For each regular $(X, Z) \in \calC'$, $F(X,Z)$ is an isomorphism.
\item[(c)]
For each morphism $f: (X',Z') \to (X,Z)$ in $\calC$, the square
\[
\xymatrix
@C=50pt{
(Y,W)(X',Z') \ar^{(Y,W)(f)}[r] \ar^{F(X',Z')}[d] & (Y,W)(X,Z) \ar^{F(X,Z)}[d] \\
(X',Z') \ar^{f}[r] & (X,Z)
}
\]
is cartesian in $\Sch'$.
\end{enumerate}
\end{theorem}
\begin{proof}
See \cite[Theorem~1.1.6]{temkin2}.
\end{proof}

\begin{remark}
A key special case of functoriality in both Theorem~\ref{T:desing1} and 
Theorem~\ref{T:desing2} is that of open immersions. This case implies that the modifications in question blow up in the smallest possible centers. Namely, in Theorem~\ref{T:desing1}, $F(X)$ is an isomorphism over the maximal regular open subscheme of $X$;
in Theorem~\ref{T:desing2}, $F(X,Z)$ is an isomorphism over the maximal open subscheme of $X$ on which $Z$ is a normal crossings divisor.
\end{remark}

\subsection{Functorial resolution of turning points}

We now state a theorem on the functorial resolution of turning points, which follows by combining
functorial resolution of singularities with our preceding arguments.

\begin{defn} \label{D:nabla-module category}
Let $\calC$ be the following category.
\begin{itemize}
\item
The objects of $\calC$ are tuples $(X,Z,\calE)$ in which $(X,Z)$ is a schematic pair,
$X$ is a nondegenerate differential scheme,
$Z$ contains no connected component of $X$,
and $\calE$ is a $\nabla$-module over $\calO_X(*Z)$.
\item
For two objects $(X,Z,\calE),(X',Z',\calE')$ of $\calC$, a morphism
$(X',Z',\calE') \to (X,Z,\calE)$ consists of a morphism $f:(X',Z') \to (X,Z)$ of schematic pairs
with $f: X' \to X$ regular,
a promotion of $f$ to a morphism of differential schemes,
and an isomorphism $\calE' \cong f^* \calE$ of $\nabla$-modules over $\calO_{X'}(*Z')$.
\end{itemize}
Let $\iota: \calC \to \Sch'$ denote the functor $(X,Z,\calE) \mapsto (X,Z)$.
\end{defn}

\begin{lemma} \label{L:regular diff}
Let $f: (X',Z',\calE') \to (X,Z,\calE)$ be a morphism in $\calC$.
\begin{enumerate}
\item[(a)]
Let $T$ (resp.\ $T'$) denote the turning locus of $\calE$ on $X$ (resp.\ the turning locus of $\calE'$ on $X'$).
Then $f^{-1}(T') = T$.
\item[(b)]
The ideal sheaf $f^*(\calI_X(\Irr(\calE))$ coincides with $\calI_{X'}(\Irr(\calE'))$.
\end{enumerate}
\end{lemma}
\begin{proof}
We first verify a special case of (a): if $T = \emptyset$, then $T' = \emptyset$. To wit,
if $T = \emptyset$, then for any $x' \in X'$ mapping to $x \in X$, we may pull back a good formal structure for $\calE$ at $x$ to obtain a good formal structure for $\calE'$ at $x'$.

We next verify (b). Apply Proposition~\ref{P:resolution of turning points} to construct a resolution of turning points $g: Y \to X$ of $\calE$. Put $Y' := Y \times_X X'$ and let $g': Y' \to X'$ be the induced morphism; by the previous
paragraph, $g'$ is a resolution of turning points of $\calE'$. Again by pulling back good formal structures,
we see that the irregularity divisor of $\calE$ on $Y$ pulls back to the irregularity divisor of $\calE'$ on $Y'$; this completes the proof of (b).

Finally, we note that (b) implies (a) using Theorem~\ref{T:numerical criterion}.
\end{proof}

\begin{theorem} \label{T:functorial resolution of turning points}
For $\calC$ as in Definition~\ref{D:nabla-module category}, there exist a covariant functor $(Y,W): \calC \to \Sch'$
and a natural transformation $F: (Y,W) \to \iota$ satisfying the following conditions.
\begin{enumerate}
\item[(a)]
For each $(X,Z,\calE) \in \calC$, the morphism $F(X,Z,\calE): Y(X,Z,\calE) \to X$ is a regularizing 
modification of $(X,Z)$.
\item[(b)]
For each $(X,Z, \calE) \in \calC$ such that $(X,Z)$ is regular and the turning locus of $\calE$ on $X$ is empty, $F(X,Z,\calE)$ is an isomorphism.
\item[(c)]
For each morphism $f: (X',Z',\calE') \to (X,Z,\calE)$ in $\calC$, the square
\[
\xymatrix
@C=50pt{
(Y,W)(X',Z',\calE') \ar^{(Y,W)(f)}[r] \ar^{F(X',Z',\calE')}[d] & (Y,W)(X,Z,\calE) \ar^{F(X,Z,\calE)}[d] \\
(X',Z') \ar^{f}[r] & (X,Z)
}
\]
is cartesian in $\Sch'$.
\end{enumerate}
\end{theorem}
\begin{proof}
By part (a) (resp.\ part (b)) of Lemma~\ref{L:regular diff},
blowing up in the turning locus (resp.\ principalization of the irregularity sheaf) is functorial in $\calC$.
We may thus combine either Corollary~\ref{C:resolution procedure1} or Corollary~\ref{C:resolution procedure2}
with functorial nonembedded and embedded resolution of singularities (Theorem~\ref{T:desing1} and Theorem~\ref{T:desing2})
to obtain the desired result.
\end{proof}

\begin{remark}
Let $(X,Z)$ be a schematic pair in which $X$ is a proper variety over $\CC$,
and let $\calE$ be a $\nabla$-module on $\calO_X(*Z)$. 
Adrian Langer (in conjunction with joint work with H\'el\`ene Esnault)
has asked whether one can bound the geometric complexity of a resolution of turning points of $\calE$
in terms of $X$, the rank of $\calE$, and a bound on the irregularity of $\calE$. In dimension 2, one may simply ask whether one can bound the number of point blowups needed to effect a resolution of turning points; in higher dimensions, one can instead ask whether the irregularity sheaf admits a projective resolution of bounded length by vector bundles of bounded rank.

The closely related question is to compute the Chern classes of the underlying bundle of $\calE$ in terms of the irregularity sheaf and other data. Note that in the regular case, the answer also involves the residues of the connection; see \cite{ohtsuki}.

These questions can be thought of as archimedean analogues of some questions of Deligne on the counting of
$\ell$-adic local systems with prescribed ramification on a smooth scheme over a finite field
\cite{deligne-comptage}.
\end{remark}

\subsection{Resolution of turning points on locally ringed spaces}
\label{subsec:resolution other categories}

We next use Theorem~\ref{T:functorial resolution of turning points} to exhibit resolutions of turning points in 
a category of locally ringed spaces satisfying suitable properties. This expands upon the brief discussion given in \cite[\S 8.2]{kedlaya-goodformal2}.

\begin{defn}
A locally ringed space $(X, \calO_X)$ is said to be \emph{over $\QQ$} if the unique morphism $X \to \Spec(\ZZ)$ factors through $\Spec(\QQ)$. In other words, every nonzero integer is invertible on $X$.
\end{defn}

\begin{defn}
Recall that for $X$ a topological space, $\calF$ a sheaf on $X$, and $S$ a subset of $X$, the \emph{stalk} $\calF_{X,S}$ of $\calF$ at $S$ is the direct limit of $\calF(V)$ over all open subsets $V$ of $X$ containing $S$. For $S  = \{x\}$, this agrees with the usual stalk $\calO_{X,x}$. For schemes, this agrees with the usual definition.
\end{defn}

The following definition is not standard, but is convenient here.

\begin{defn} \label{D:excellent lrs}
Let $(X, \calO_X)$ be a locally ringed space. A subset $U$ of $X$ is \emph{excellent} if the following conditions hold.
\begin{enumerate}
\item[(a)] The stalk $\calO_{X,U}$ is an excellent ring. 
\item[(b)]
For each maximal ideal $\frakm$ of $\calO_{X,U}$, 
the preimage of $\frakm$ under the canonical map $U \to \Spec(\calO_{X,U})$ consists of a single point $x$.
\item[(c)]
For all $\frakm, x$ as in (b), the homomorphism $(\calO_{X,U})_\frakm \to \calO_{X,x}$ of local rings induces an isomorphism of maximal-adic completions (and so in particular is a local homomorphism).
\end{enumerate}
We say that $X$ is \emph{excellent} if every point admits a cofinal system of excellent neighborhoods.
(This system need not be closed under pairwise intersections.)
\end{defn}

\begin{lemma} \label{L:excellent regular inclusion}
Let $(X, \calO_X)$  be a locally ringed space.
For any inclusion of excellent subsets $U \subseteq V$,
the morphism $\calO_{X,V} \to \calO_{X,U}$ is regular.
\end{lemma}
\begin{proof}
By Lemma~\ref{L:regular maximal}, it suffices to check that for every maximal ideal $\frakq$ of $\Spec(\calO_{X,U})$
lying over a maximal ideal $\frakp$ of $\Spec(\calO_{X,V})$, the morphism $(\calO_{X,V})_{\frakp} \to (\calO_{X,U})_{\frakq}$ is regular. 
By hypothesis, the preimage of $\frakq$ under the natural map $U \to \Spec(\calO_{X,U})$ consists of a single point 
$x$, and $(\calO_{X,U})_{\frakq} \to \calO_{X,x}$ is a local homomorphism.
Since $(\calO_{X,V})_{\frakp} \to (\calO_{X,U})_{\frakq}$ is also a local homomorphism, so is the composition
$(\calO_{X,V})_{\frakp} \to \calO_{X,x}$; that is, $x$ belongs to the
preimage of $\frakp$ under the natural map $V \to \Spec(\calO_{X,V})$.
It must then be the unique such point, and the homomorphism $(\calO_{X,V})_{\frakp} \to \calO_{X,x}$ must again induce an isomorphism of maximal-adic completions. It follows that $(\calO_{X,V})_{\frakp} \to (\calO_{X,U})_{\frakq}$ itself induces an isomorphism of maximal-adic completions. Since $(\calO_{X,U})_{\frakq}$ is noetherian, the morphism to its completion is faithfully flat; we may therefore apply
Remark~\ref{R:fppf} to deduce that $(\calO_{X,V})_{\frakp} \to (\calO_{X,U})_{\frakq}$ is regular, as desired.
\end{proof}

\begin{defn}
A \emph{differential space} is a triple $(X, \calO_X, \calD_X)$ in which $(X, \calO_X)$ is an excellent locally ringed space over $\QQ$ and $\calD_X$ is a coherent $\calO_X$-module acting on $\calO_X$ via derivations. We say that such a space is \emph{nondegenerate} if for each $x \in X$, $\calO_{X,x}$ is a nondegenerate differential ring.
Note that the blowup of $X$ along a regular center is again a nondegenerate differential ring.

Let $\calI$ be a coherent ideal sheaf on $X$ with nowhere vanishing stalks. A \emph{$\nabla$-module over $\calO_X(*\calI)$} is a vector bundle $\calE$ on $X$ equipped with an action of $\calD_X$ on $\calE(*\calI)$ satisfying the Leibniz rule. 

For $x \in X$, put $X' := \Spec(\calO_{X,x})$, let $Z'$ be the closed subscheme of $X'$ cut out by the stalk at $x$ of the ideal sheaf cutting out $Z$ on $X$, and view the stalk $\calE_x$ as a $\nabla$-module over $\calO_{X'}(*Z')$.
We say that $x$ is a \emph{turning point} for $\calE$ if the closed point of $X'$ is a turning point for $\calE_x$.
Again, we define the \emph{turning locus} of $\calE$ to be the set of turning points.
\end{defn}

In order to discuss resolutions of turning points, we need to consider not individual locally ringed spaces, but entire categories thereof.
\begin{defn}
Let $\calC$ be a category of differential spaces, let $X$ be an object of $\calC$, and let $\calI$ be a coherent ideal sheaf on $X$. A \emph{blowup} of $X$ along $\calI$ is a final object $f: Y \to X$ in the category of $\calC$-objects over $X$ for which the inverse image ideal sheaf $f^{-1}(\calI) \cdot \calO_Y$ is locally principal. Such an object is of course unique up to unique isomorphism if it exists, so we will typically refer to it as ``the'' blowup of $X$ along $\calI$. If the blowup exists for all $X$ and $\calI$, we say that $\calC$ is \emph{closed under blowups}.
\end{defn}

\begin{theorem} \label{T:abstract resolution of turning points}
Let $\calC$ be a category of differential spaces which is closed under blowups.
Let $X$ be an object of $\calC$, let $\calI$ be a coherent ideal sheaf with nowhere vanishing stalks, and let $\calE$ be a $\nabla$-module over $\calO_X(*\calI)$.
Then there exists a morphism $f: Y \to X$ in $\calC$ which is a composition of blowups, such that $f^{-1}(\calI) \cdot \calO$ is locally principal and the turning locus of $f^* \calE$ is empty (that is, $f$ is a \emph{resolution of turning points} of $\calE$ within $\calC$).
Moreover, $f$ can be chosen so that the turning locus equals the complement of the maximal open subspace of $X$ over which $f$ is an isomorphism.
\end{theorem}
\begin{proof}
Since $\calE$ is by definition a locally free $\calO_X$-module, for every $x \in X$ we can find a neighborhood of $X$ on which $\calE$ is finite free. By further shrinking, we can choose such a neighborhood $U$ which is also excellent.
For any such $U$, we may apply Theorem~\ref{T:functorial resolution of turning points} to find a
modification $f: Y \to \Spec(\calO_{X,U})$ which is a composition of blowups,
such that $f$ principalizes the ideal $\calI_U$ and is a resolution of turning points of $\calE_U$. 
Since $\calC$ is closed under blowups, we may emulate these blowups in $\calC$ to achieve the desired result.
\end{proof}

\subsection{Resolution of turning points in geometric categories}

We conclude by constructing resolutions of turning points in some geometric categories other than excellent $\QQ$-schemes. The list of eligible categories is not exhaustive, but should suffice to illustrate the point.

\begin{theorem} \label{T:global}
Theorem~\ref{T:abstract resolution of turning points} applies when $\calC$ is any of the following categories.
\begin{enumerate}
\item[(a)]
The category of smooth schemes over a field $K$ of characteristic $0$.
\item[(b)]
The category of formally smooth formal schemes over a field $K$ of characteristic $0$.
\item[(c)]
The category of rigid analytic spaces over a nonarchimedean field $K$ of characteristic $0$ (with residue field of arbitrary characteristic).
\item[(d)]
The category of Berkovich analytic spaces over a nonarchimedean field $K$ of characteristic $0$.
\item[(e)]
The category of smooth complex-analytic varieties.
\item[(f)]
The category of formally smooth complex-analytic varieties (i.e., formal completions of smooth complex-analytic varieties along closed analytic subspaces).
\end{enumerate} 
\end{theorem}
\begin{proof}
In all cases, the issue is to establish the existence of excellent neighborhoods,
as then Theorem~\ref{T:abstract resolution of turning points} applies.
In cases (a)--(d), the rings corresponding to ``affine building blocks'' of these spaces are excellent:
\begin{enumerate}
\item[(a)]
$K$-algebras of finite type (straightforward);
\item[(b)]
completions of $K$-algebras of finite type (see \cite[Remark~1.2.9]{kedlaya-goodformal2});
\item[(c)]
affinoid algebras over $K$ (see \cite[Satz~3.3]{kiehl});
\item[(d)]
Berkovich affinoid algebras over $K$ (see \cite[Th\'eor\`eme~2.13]{ducros}).
\end{enumerate}
This completely settles cases (a) and (b).

In case (c), we must first comment that in order to get objects in the category of locally ringed spaces (rather than G-locally ringed spaces, defined with respect to a G-topology) we must replace rigid analytic spaces with their associated Huber adic spaces, as in \cite{vanderput-schneider}.
In this context, condition (b) of Definition~\ref{D:excellent lrs} is formal, while 
condition (c) is implied by \cite[Proposition~7.2.2/1]{bgr}. 

In case (d), we again must pass from the original category of Berkovich analytic spaces, which are defined with respect to a G-topology, with their associated reified adic spaces, as in \cite{kedlaya-reified}.
In this context, we may reduce everything to case (c) by passing from $K$ to a sufficiently large extension field.

For (e), we take the neighborhoods in question to be closed polydiscs. 
In Definition~\ref{D:excellent lrs}, condition (a) is covered by \cite[Corollary~3.2.7]{kedlaya-goodformal2},
and condition (c) is straightforward.
To deduce (b), note that if $U$ is a closed polydisc in a complex-analytic manifold $X$
and $I$ is a maximal ideal of $\calO_{X,U}$ which is not the contraction of the maximal ideal at any $x \in X$,
then by compactness there must exist a finite subset $S$ of $I$ whose zero loci have empty intersection in $X$.
There exists some open polydisc $V$ containing $U$ such that the elements of $S$ all extend to $V$
and their zero loci continue to have empty intersection on $V$; since $V$ is a Stein space and the ideal sheaf on $V$ generated by $S$ is trivial, the elements of $S$ generate the unit ideal in $\calO_{X,U}$, contradiction.

For (f), the arguments are similar to those in (c).
\end{proof}

\begin{remark} \label{R:transfer results}
In the same way, we may transfer other results on good formal structures from excellent schemes to the other categories listed in Theorem~\ref{T:global}; this includes the purity theorem (Theorem~\ref{T:purity of the turning locus}) and the construction of the irregularity sheaf (Definition~\ref{D:irregularity sheaf}). In particular,
this process is implicit in our prior assertions that Andr\'e's purity and semicontinuity theorems from \cite{andre},
which are formulated in terms of complex-analytic varieties, can be recovered from Theorem~\ref{T:purity of the turning locus} and Corollary~\ref{C:semicontinuity}.
\end{remark}

\begin{remark}
It may also be possible to go in the other direction, by using algebraization/approximation techniques to transfer resolution of turning points from complex algebraic varieties to excellent schemes. If so, this would mean (roughly) that one could recover all of the results of this paper 
with the primary dependence on \cite{kedlaya-goodformal1, kedlaya-goodformal2}, namely the existence of uncontrolled resolutions of turning points (Proposition~\ref{P:resolution of turning points}), replaced by a dependence on Mochizuki's corresponding result \cite[Theorem~19.5]{mochizuki2} (see also Remark~\ref{R:mochizuki def}).
We have not made a serious attempt to do this.
\end{remark}

\appendix
\section{Errata for \cite{kedlaya-goodformal2}}

We record here an erratum for \cite{kedlaya-goodformal2} pointed out to us by Matthew Morrow.
Therein, the proof of Lemma 3.1.6 is insufficient: while any regular sequence of parameters of $R$ does contain a sequence of parameters of $R_{\mathfrak{q}}$, it need not contain a \emph{regular} sequence of parameters. To give a completed argument, we first observe that the proofs of Lemma 3.1.7, Corollary 3.1.8, and Corollary 3.1.9 do not depend on Lemma 3.1.6, so we may use them freely in what follows.

Let $\partial_1,\dots,\partial_n$ be a sequence of derivations of rational type
with respect to the regular sequence of parameters $x_1,\dots,x_n$ of $R$.
Let $y_1,\dots,y_m$ be a sequence in $R$ which is a regular sequence of parameters of $R_{\mathfrak{q}}$.
Since $\widehat{R}$ satisfies the weak Jacobian criterion by [32, Theorem 100], we may reorder the original 
sequence $x_1,\dots,x_n$ so as to ensure that the $m \times m$ matrix $A$ given by $A_{ij} = \partial_i(y_j)$
has nonzero determinant modulo $\mathfrak{q}$. We may then define the derivations
$\partial'_j = \sum_i (A^{-1})_{ij} \partial_i$ on $R_{\mathfrak{q}}$ for $j=1,\dots,m$.

To complete the proof, we must establish that the derivations $\partial'_1,\dots,\partial'_m$ commute.
To see this, we may assume without loss of generality that $R$ is complete; by Corollary 3.1.8, we then have
$R \cong k\llbracket x_1,\dots,x_n \rrbracket$, so $k \llbracket x_1,\dots,x_m \rrbracket$ is contained in the joint kernel of $\partial'_1,\dots,\partial'_m$. 
By counting dimensions, we see that
$R/\mathfrak{q}$ is finite over $k\llbracket x_1,\dots,x_m \rrbracket$;
we may thus identify the completion of $R_{\mathfrak{q}}$ with $\ell \llbracket y_1,\dots,y_m \rrbracket$ where $\ell$ is the integral closure of the fraction field of 
$k \llbracket x_1,\dots,x_m \rrbracket$ in $R_{\mathfrak{q}}$. 
On this ring, the actions of $\partial'_1,\dots,\partial'_m$ are all $\ell$-linear, so they must coincide with the formal partial derivatives in the variables $y_1,\dots,y_m$; this proves the claim.

In addition, we report one further typo in \cite{kedlaya-goodformal2}:
 in Lemma 3.2.5(a), the reference to [33, Theorem 101] should be to [32, Theorem 101].

\end{document}